\newtheorem{proposition}{Proposition}
\newtheorem{corollary}{Corollary}
\newtheorem{lemma}{Lemma}
\newtheorem{remark}{Remark}
\begin{document}

\begin{frontmatter}

\title{\textcolor{black}{Controlling arrival and service rates to reduce sensitivity of queueing systems with customer abandonment}}


\author[mymainaddress]{Katsunobu Sasanuma\corref{mycorrespondingauthor}}
\ead{katsunobu.sasanuma@stonybrook.edu}
\cortext[mycorrespondingauthor]{Corresponding author}

\author[mysecondaryaddress]{Robert Hampshire}
\ead{hamp@umich.edu}

\author[mythirdaddress]{Alan Scheller-Wolf}
\ead{awolf@andrew.cmu.edu}

\address[mymainaddress]{College of Business, Stony Brook University, Stony Brook, NY 11794, USA}
\address[mysecondaryaddress]{Ford School of Public Policy, University of Michigan, MI 48109, USA}
\address[mythirdaddress]{Tepper School of Business, Carnegie Mellon University, Pittsburgh PA 15213, USA}


\begin{abstract}
\textcolor{black}{The Erlang A model--an M/M/s queue with exponential abandonment--is often used to represent a service system with impatient customers. For this system, the popular square-root staffing rule determines the necessary staffing level to achieve the desirable QED (quality-and-efficiency-driven) service regime; however, the rule also implies that properties of large systems are highly sensitive to parameters. We reveal that the origin of this high sensitivity is due to the operation of large systems at a point of singularity in a phase diagram of service regimes. We can avoid this singularity by implementing a \emph{congestion-based control} (CBC) scheme--a scheme that allows the system to change its arrival and service rates under congestion. We analyze a modified Erlang A model under the CBC scheme using a Markov chain decomposition method, derive non-asymptotic and asymptotic normal representations of performance indicators, and confirm that the CBC scheme makes large systems less sensitive than the original Erlang A model.}
\end{abstract}



%


\begin{keyword}
\textcolor{black}{system with customer abandonment, Erlang A, reneging, balking, congestion-based control, sensitivity, phase diagram}
\end{keyword}
\end{frontmatter}
\section{Introduction}
\textcolor{black}{When a service facility is congested, we often observe impatient customers; they may decide to abandon the facility and leave, either by balking (not joining a queue) or reneging (leaving a queue). To evaluate the quality of service (QoS) of such facilities, it is convenient to use the Erlang A queueing model--an M/M/s queueing model with exponential reneging. The analysis of this model has revealed that three distinct asymptotic regimes exist: Quality-and-Efficiency-Driven (QED), Quality-Driven (QD), and Efficiency-Driven (ED) regimes \cite{garnett2002designing,mandelbaum2005palm}. Among these regimes, QED is \emph{practically} important since its delay probability (which we denote as $P_Q$) achieves a value strictly between~0 and~1, balancing good service and reasonable capacity cost. This QED regime is realized following the square-root staffing rule, i.e., setting the number of staff~$s$ in the vicinity of the square root of the resource requirement~${R=\lambda/\mu}$, where~$\lambda$ is an arrival rate and $\mu$ is a service rate per worker. For example, for a large system with $R=10,000$, $s$ could be set between $R \pm \sqrt{R} = 9,900-10,100$, which suggests that the control range of $s$ is within $\pm 1\%$ of $R$. At the limit of large $R$, the control range of $s$, when normalized by $R$, approaches zero. Hence, a small fluctuation of $s$, $\lambda$, or $\mu$, and thus $s/R$ could make the system fall into either the ED ($P_Q \approx 1$) or QD ($P_Q \approx 0$) regimes. In fact, it has been pointed out that for large systems, ``the above three-regime dichotomy is rather delicate'' \cite{mandelbaum2005palm} and ``operating in the QED regime, the performance measures tend to be highly sensitive to changes in the arrival rate, the service rate, or the number of servers'' \cite{whitt2006sensitivity}. Thus, in order to maintain a system operating in a QED regime, facility operators need to constantly alter the staffing level in response to variations in the arrival and service rates; otherwise, the system could easily suffer from extreme delay probabilities--100\% or 0\%. Such a drastic fluctuation is obviously unfavorable for business. However, most previous literature focused on refining square-root staffing rules, and has not explained the origin of, or provided solutions to, this extreme sensitivity.}

\textcolor{black}{To reduce the sensitivity of the QoS for a system with customer abandonment, we propose a \emph{congestion-based-control} (CBC) scheme that controls the system's arrival and service rates during congestion. The implementation of the CBC scheme is commonly observed in practice: facility managers may try to temporarily suppress their arrival rate by informing customers a system is congested; they may also try to induce a faster service rate by providing staff with monetary incentives when a system is congested.}

\textcolor{black}{To evaluate the impact of the CBC scheme, we study a modified Erlang A model, in which the original Erlang A model is extended to incorporate either reneging or balking, and is allowed to change arrival and service rates during congestion. We provide both non-asymptotic and asymptotic normal representations of performance indicators for our modified Erlang A model. A non-asymptotic representation provides numerically accurate formulae to calculate performance indicators for even small systems, for which the asymptotic formulae are not reliable. An asymptotic representation provides insights into the sensitivity of the modified Erlang A system, i.e., operation at a point of singularity in a phase diagram, and explains a simple rule-of-thumb on how to avoid a singularity, expand the range of the \emph{desired} QED regime, and achieve stable operations of large systems. Specifically, we show that under the CBC scheme, the square-root staffing rule to realize the QED regime is converted to the linear staffing rule, making the control range of the number of servers to maintain the QED regime the order of $R$ instead of $\sqrt{R}$. Thus the system under the CBC scheme becomes more robust and less sensitive to parameters than the original Erlang A model.}


The remainder of this paper is organized as follows. Section~\ref{sec:litrev} reviews the related literature. Section~\ref{sec:model} explains the modified Erlang A model with the CBC scheme. In Section~\ref{sec:perform}, we take a Markov chain decomposition approach to represent performance indicators using blocking probabilities of decomposed sub-chains and obtain a non-asymptotic normal representation of performance indicators. In Section~\ref{sec:asymptotic}, we take a limit of large systems and derive phase diagrams of service regimes. We show the results of numerical experiments in Section~\ref{sec:numerical}. Finally, Section~\ref{sec:conclusion} concludes the paper.

\section{Literature Review}\label{sec:litrev}
Customer abandonment, such as reneging (leaving a queue while waiting) and balking (leaving a system before joining a queue), has been one of the main interests in queueing systems for a very long time. \cite{palm1957research} built a multi-server queueing model with reneging, the Erlang A (M/M/n+M) model, that allows customers to abandon a system. In this system, each arrival has exponential patience and reneges if the waiting time exceeds the patience. \cite{udagawa1957queue,haight1959queueing,ancker1963a} studied customer abandonment using a single server queueing model. They not only analyzed reneging, but also balking using the same framework and provided exact solutions \cite{ancker1963a}; however, their exact solutions are only numerically tractable.

In more recent years, researchers have been interested in obtaining intuition and simple rules-of-thumb that can be used by practitioners. For this purpose, they have developed various approximation techniques. One of the most frequently used methods is heavy-traffic approximation, which is sufficiently accurate when congestion is persistent. Researchers have found many simple rules-of-thumb for congested systems by applying heavy-traffic approximation. For example, heavy-traffic approximation has been applied to on-street parking problems \cite{larson2010congestion}, public housing applications \cite{kaplan1987analyzing}, and kidney transplantations \cite{zenios1999modeling}.

Heavy-traffic approximation is simple and effective, but it can only be applied to a system under heavy congestion. To study a system not in heavy congestion, \cite{halfin1981heavy} apply an asymptotic method (diffusion approximation) to analyze the Erlang C model (an M/M/s queue with no customer abandonment) and provide an important, yet simple, square-root staffing rule to identify the staffing level that satisfies a desired QoS level. \cite{garnett2002designing} apply the same asymptotic framework to the Erlang A model and classify its performance into three distinctive phases (regimes): QED (asymptotic limit of the probability of queueing $P_Q$ is strictly between 0 and 1), QD ($P_Q \to 0$), and ED ($P_Q \to 1$).

\textcolor{black}{The square-root staffing rule and its more refined variations have been well-studied \cite[see, for example,][]{gans2003telephone,borst2004dimensioning,bassamboo2006design,gurvich2008service, mandelbaum2009staffing,zhang2012staffing,dai2012many,gurvich2013excursion}. Although these analysis provide accurate results for the Erlang A and its generalized models, the facilities represented by these models commonly exhibit a fundamental operational problem: high sensitivity of the QoS properties for systems with customer abandonment. The study conducted by \cite{whitt2006sensitivity} analyzed the sensitivity of performance indicators to changes in the model parameters, and concluded that ``performance is quite sensitive to small percentage changes in the arrival rate or the service rate.'' The fundamental problem is in the square-root staffing rule: the control range of the appropriate number of servers (staff members) to achieve the QED regime is of the order of $\sqrt{R}$, where $R=\lambda/\mu$. Thus, when systems are large (with large $R$), the control range (normalized by the system size) becomes very small. In fact, the QED regime that exists in between two extreme (QD and ED) regimes is rather delicate, as pointed out by \cite{mandelbaum2005palm}. Thus, when arrival and service rates fluctuate (as they often do in practice), the performance of the system could drastically change due to a slight change of $R$.}

\textcolor{black}{In practice, to prevent such fluctuations, operators often try to control the system by changing its characteristics over time. According to \cite{crabill1977classified}, four different control schemes can be considered as effective measures: (1) Control of the number of staff, (2) Control of the arrival rate, (3) Control of the service rate, and (4) Control of the queue discipline. Out of these four possible schemes, (1) and (4) have been analyzed in \cite{feldman2008staffing}, but it is not always possible to quickly change staff or discipline especially when changes in parameters are unexpected. An alternative, more manageable approach could be (2) or (3). As an example of (2), \cite{koccauga2010admission} consider the trade-off between blocking arrivals and server idleness, leading to an optimal threshold queue admission policy. Another example is \cite{sanders2017optimal}: They study an admission control policy within a revenue maximization framework for large-scale systems that operate in the QED regime. Options (2) and (3) are often studied jointly; such examples include \cite{ghosh2010optimal} and \cite{koccauga2017approximating}, whose objective is to minimize long-run average costs. These examples demonstrate the effectiveness of (2) and (3); however, most previous literature focuses on the long-run average cost, and does not attempt to explain how the measures of (2) and (3) could reduce the sensitivity of large-scale abandonment systems. In this paper, we consider (2) and (3)--controlling arrival and/or service rates under congestion--and call these measures the \emph{congestion-based control} (CBC) scheme. Our analysis reveals the root cause of the highly sensitive property of abandonment systems, enabling us to utilize the CBC scheme to make such systems more robust.}

\textcolor{black}{We study a modified Erlang~A model with the CBC scheme, which considers two types of abandonment (reneging/balking). Our analysis employs the Markov chain decomposition approach: Following \cite{sasanuma2019markov}, we decompose the entire system into two sub-systems, an M/M/s/s queue and the reneging/balking queue, and analyze each sub-system separately. The same approach has been utilized to solve other variations of abandonment systems \cite{SASANUMA2021212, sasanuma2021asymptotic}. This decomposition approach has three major analytical benefits. First, it reveals the relationship between the full system and the sub-systems to help us understand how each sub-system contributes to the system performance. Specifically, for this modified Erlang A model, when the two resource requirements ($R$ for the M/M/s/s queue and $R_Q$ for the reneging/balking queue) match, we observe a singularity point in which three regimes (QD, ED, and QED) co-exist in a phase diagram of asymptotic service regimes. Since this singularity is the root cause of the high sensitivity of the full system, we can make large systems more robust by simply making the two resource requirements of sub-systems different and eliminating the singularity; this simple idea is the primary insight into the CBC scheme. The second benefit is that the decomposition method makes the calculation procedure efficient by utilizing the previously known results for the M/M/s/s and the reneging/balking queues; for example, we can utilize the analytical results of sub-chains shown in \cite{SASANUMA2021212} and \cite{sasanuma2021asymptotic}. Lastly, the decomposition approach makes the analysis easier since each decomposed sub-chain is elementary and its approximate analytical properties are simply described by Poisson/normal probability functions.}

\textcolor{black}{In summary, the contributions of this paper are: 1) we derive non-asymptotic and asymptotic representations for the QoS performance indicators of the modified Erlang~A system following the Markov chain decomposition approach; 2) we reveal how the CBC scheme can avoid singularity in the phase diagram of service regimes and make large systems more robust to changes in model parameters; and 3) we show that the CBC scheme converts the square-root staffing rule into the linear staffing rule in the asymptotic limit of large systems.}


\section{Modified Erlang A Model}\label{sec:model}
\subsection{Setup of the Model}
Our modified Erlang A model is a simple extension of the original Elang A model. Figure \ref{fig:MarkovChain} shows the Markov chain (MC) structure of the modified Erlang A reneging model.
\begin{figure}[H]
\centering
{\includegraphics*[scale=0.2]{./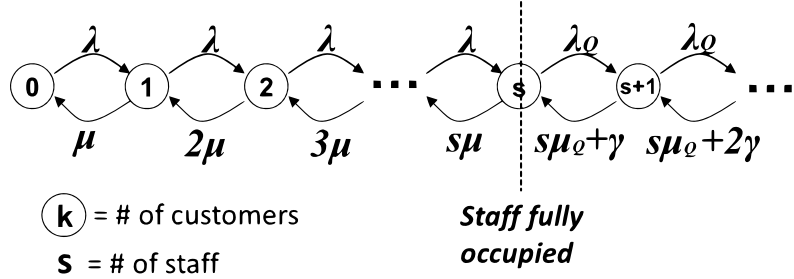}}
\caption{The modified Erlang A model with exponential reneging.}\label{fig:MarkovChain}
\end{figure}
We assume that there are $s$ staff members in the system, the arrival process is Poisson with rate $\lambda$, and the service time is exponential with rate $\mu$. Unlike the original Erlang A reneging model, the modified Erlang A model allows a step change in the baseline arrival and service rates when a system is busy: The arrival rate can drop by a proportion $\varepsilon\:(\geq 0)$, and the service rate can increase/decrease by a proportion $\tau$, where $\varepsilon$ and $\tau$ are altered by a congestion-based control (CBC) policy. The arrival and service rates when a system is busy are defined as $\lambda_{Q} \doteq \left(1-\varepsilon \right)\lambda $ and $\mu _{Q} \doteq \left(1+\tau \right)\mu $, respectively. Additionally, the modified Erlang A model allows customer abandonment either through exponential reneging or state-dependent balking, but not both in the same model. Specifically, for the modified Erlang A reneging model, we assume that each customer in queue reneges after an exponentially distributed time with rate $\gamma\:(>0)$ (note: the $\varepsilon =\tau =0$ case corresponds to the original Erlang A model); for the modified Erlang A balking model, we assume that the arrival rate drops by a linear balking rate $\delta\:(>0)$ for each additional customer in queue. In sum, the birth and death coefficients for the modified Erlang A reneging and balking models are presented as follows. (Note that $(\cdot )^+$ denotes a positive part.)

\begin{enumerate}
\item For the reneging system, the total arrival rate and the total service rate at state $k$ are
\footnotesize
$\lambda _{k} =\left\{\begin{array}{ll} {\lambda } & {0 \le k<s} \\ {\lambda _{Q} } & {s\le k} \end{array}\right. $ and  $\mu _{k} =\left\{\begin{array}{ll} {k\mu } & {1 \le k\le s} \\ {s\mu _{Q} +\left(k-s\right)\gamma } & {s<k} \end{array}\right.$.\\
\normalsize
\item For the balking system, the total arrival rate and the total service rate at state $k$ are
\footnotesize
$\lambda _{k} =\left\{\begin{array}{ll} {\lambda } & {0 \le k<s} \\ {(\lambda _{Q} -\delta \cdot (k-s))^+} & {s\le k} \end{array}\right. $ and  $\mu _{k} =\left\{\begin{array}{ll} {k\mu } & {1 \le k\le s} \\ {s\mu _{Q} } & {s<k} \end{array}\right.$.
\normalsize
\end{enumerate}


\textcolor{black}{We define two resource requirements for the system: $R \doteq \lambda/\mu$ when a system is not busy and $R_Q \doteq \lambda_Q/\mu_Q$ when a system is busy. Since we implement the CBC policy to improve the performance of the system when it is busy, we only consider the case where both $R_Q \leq R$ and $\lambda_Q \leq \lambda$ hold; thus proportions $\varepsilon$ and $\tau$ satisfy $0 \leq \varepsilon \leq 1$, $-1 \leq \tau \leq 1$, and $\varepsilon+\tau \geq 0$. (Note that we can use the same technique to discuss the cases $R_Q > R$ (a system slows down when it is busy) and/or $\lambda_Q > \lambda$ (more customers are attracted to join when a system is busy), but we do not discuss these cases in this paper.) Finally, we assume an independence among parameters: $\gamma$, $\delta$, $\varepsilon$, and $\tau$ do not depend on $\lambda$, $\mu$, $s$, or state $k$.}

\subsection{Decomposition of the Markov Chain}
\textcolor{black}{To solve the modified Erlang A model, we split the entire system into two sub-systems and analyze them separately since the two sub-systems, one when it is busy and the other when it is not, posses very different queueing properties, which are easy to analyze independently but complicated to study jointly.} Specifically, we divide the full MC that represents the modified Erlang A model into an M/M/s/s sub-chain and a reneging/balking sub-chain, which overlap at state $s$. We denote the left sub-chain (M/M/s/s sub-system comprised of states 1 to $s$) as sub-chain 1, and the right sub-chain (reneging/balking sub-system comprised of states $s$ or larger) as sub-chain 2. Customers are put in queue if they enter when the system is in sub-chain 2. We denote the probabilities of being in sub-chains 1 and 2 as $P_{1}$ and $P_{2}$, respectively. We denote the queueing probability (delay probability) as $P_Q\:(=P_{2}$); the abandonment probability (abandonment includes both reneging and balking) as $P_{ab}$; the expected number of customers in queue as $L_{Q}$; and the expected time in queue as $W_Q$. All of these performance indicators can be represented using the steady-state probabilities of state $s$ in the sub-chains and full MC, which are denoted as $\pi _{s}^{1} $, $\pi _{s}^{2} $, and $\pi_s$, respectively. For simplicity, we call $\pi _{s}^{1} $, $\pi _{s}^{2} $, and $\pi _{s}$ the \emph{blocking} probabilities of sub-chains 1, 2, and the full MC, respectively.

For analytical convenience, we introduce three parameters: (1) $p \doteq 1-s\mu_Q/\lambda$, (2) $a = a_{s;R} \doteq (s-R)/R$, and (3) $c=c_{s;R}  \doteq (s-R)/\sqrt R$. First, $p$ represents a heavy-traffic limit of the abandonment probability\footnote{Regardless of how a customer abandons a system, in a heavy-traffic limit, the system accommodates $s\mu_Q$ customers per unit time on average. Hence, the number of customers abandoning a system is $\lambda-s\mu_Q$, from which we obtain the heavy-traffic limit of $P_{ab}$ as $(\lambda-s\mu_Q)/\lambda=1-\mu_Q/\lambda$.} when $p>0$ ($\lambda>s\mu_Q$), but $p$ can also take a negative value depending on parameters. Second, $a$ is a linear (staffing) coefficient of the number of staff $s$ measured relative to $R$, in units of $R$. Finally, $c$ is a square-root (staffing) coefficient of $s$ measured relative to $R$, in units of $\sqrt R$. Note that from the non-negativity requirement on the number of staff $s$, $p \leq1$, $a \geq-1$, and $c \geq -\sqrt R$ must be satisfied.

\subsection{Quality-of-Service Performance Indicators}
For the modified Erlang A model, the steady-state probability $\pi_s$ as well as the QoS performance indicators such as the delay probability $P_Q$, the abandonment probability $P_{ab}$, and the average queue length $L_Q$ can all be represented by the blocking probabilities of the sub-chains, $\pi _{s}^{1} $ and $\pi _{s}^{2} $, as summarized in Lemma~\ref{str-rep}. We call these representations {\it structural} since they reveal how performance indicators are constructed from sub-chains. Note that we obtain the performance indicators for the original Erlang A model  as a special case by setting $\varepsilon=\tau=0$ and $\theta=\gamma$ in Lemma~\ref{str-rep}. All proofs are in the Appendix.

\begin{lemma}
\label{str-rep}
Performance indicators of the modified Erlang A model are represented as follows:
\begin{align}
\label{pi_s}
\frac{1}{\pi _s} &=\frac{1}{\pi _s^1}+\frac{1}{\pi _s^2}-1,\\
\label{P_Q}
{P_Q} &=  \frac{\pi_s}{\pi _s^2}= \frac{\frac{1}{\pi _s^2}}{\frac{1}{\pi _s^1}+\frac{1}{\pi _s^2}-1},\\
\label{P_AB}
{P_{ab}} &= \frac{1+ p\cdot (\frac{1}{\pi _s^2}-1)}{\frac{1}{\pi _s^2}}{P_Q}\\
\label{P_AB2}
&= \frac{1+ p\cdot (\frac{1}{\pi _s^2}-1)}{\frac{1}{\pi _s^1}+\frac{1}{\pi _s^2}-1},\\
\label{L_Q}
L_{Q}&=\frac{\lambda}{\theta} \cdot (P_{ab}-\varepsilon P_Q)\\
\label{L_Q2}
&=\frac{\lambda}{\theta} \cdot \frac{(1-\varepsilon)+ (p-\varepsilon)(\frac{1}{\pi _s^2}-1)}{\frac{1}{\pi _s^1}+\frac{1}{\pi _s^2}-1},
\end{align}
where
\begin{equation}
\label{p}
p = 1-\frac{s\mu_Q}{\lambda} = 1-(1+\tau)(a+1)=-(1+\tau)a-\tau
\end{equation}
and $\theta=\gamma$ (or $\delta$) for a reneging (or balking) system.
\end{lemma}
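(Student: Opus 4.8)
The plan is to set up the balance equations for the full birth–death chain and for each sub-chain, then exploit the fact that the two sub-chains share state $s$ to stitch their stationary distributions together. First I would recall that for a birth–death chain the stationary probabilities satisfy the standard product form $\pi_k = \pi_0 \prod_{j=1}^{k} \lambda_{j-1}/\mu_j$, so each of the three chains (full MC, sub-chain~1 on states $0,\dots,s$, sub-chain~2 on states $s,s+1,\dots$) is determined by its normalizing constant. The key structural observation is that the birth/death coefficients of the full chain \emph{restricted} to $\{0,\dots,s\}$ coincide with those of sub-chain~1, and restricted to $\{s,s+1,\dots\}$ coincide with those of sub-chain~2. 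Writing $G_1 = \sum_{k=0}^{s}\prod_{j\le k}\lambda_{j-1}/\mu_j$ for the (unnormalized) mass of sub-chain~1 measured from state~$0$, and similarly $G_2$ for sub-chain~2 measured from state~$s$, one gets $1/\pi_s^1 = G_1/w_s$ and $1/\pi_s^2 = G_2/w_s$ where $w_s = \prod_{j\le s}\lambda_{j-1}/\mu_j$ is the common weight of state~$s$. The full chain has mass $G_1 + G_2 - w_s$ (the $-w_s$ removes the double-counting of state~$s$), which immediately gives $1/\pi_s = G_1/w_s + G_2/w_s - 1 = 1/\pi_s^1 + 1/\pi_s^2 - 1$, i.e.\ \eqref{pi_s}. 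Equation \eqref{P_Q} then follows since $P_Q = P_2 = \sum_{k\ge s}\pi_k = \pi_s \cdot (G_2/w_s) = \pi_s/\pi_s^2$.

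Next I would handle $P_{ab}$. The abandonment rate in state $k\ge s$ is the difference between the inflow rate $\lambda$ (or, from state $s$ onward, whatever the nominal arrival rate is) and the effective throughput; more directly, for the reneging model the total reneging rate is $\sum_{k>s}(k-s)\gamma\,\pi_k$ and for the balking model the total balking rate is $\sum_{k\ge s}(\lambda_Q - \delta(k-s))^+ $ lost, but in both cases a cleaner route is a rate-conservation (flow-balance) argument: in steady state the rate of customers entering service from the queue plus the abandonment rate equals the arrival rate into the queueing region. Using $\mu_Q$ and the definition $p = 1 - s\mu_Q/\lambda$ one expresses the abandonment rate within sub-chain~2 as a linear function of $\pi_s^2$ and $P_2$; dividing by $\lambda$ yields $P_{ab} = \frac{1 + p(1/\pi_s^2 - 1)}{1/\pi_s^2}\,P_Q$, which is \eqref{P_AB}, and substituting \eqref{P_Q} gives \eqref{P_AB2}. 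For \eqref{L_Q}, I would invoke a Little's-law / rate-conservation identity on the queue: customers leave the queue either by abandoning (rate $\propto \theta L_Q$, since total abandonment rate is $\gamma L_Q$ for reneging and $\delta L_Q$ for balking — this is the one spot where $\theta = \gamma$ or $\delta$ enters) or by the arrival-rate reduction bookkeeping captured by $\varepsilon P_Q$; equating the net queue-departure rate $\theta L_Q$ to $\lambda(P_{ab} - \varepsilon P_Q)$ gives \eqref{L_Q}, and \eqref{L_Q2} is then just back-substitution of \eqref{P_AB2}. Finally \eqref{p} is pure algebra: $s\mu_Q/\lambda = (1+\tau)s\mu/\lambda = (1+\tau)(s/R) = (1+\tau)(a+1)$, so $p = 1 - (1+\tau)(a+1) = -(1+\tau)a - \tau$.

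The main obstacle I anticipate is the $P_{ab}$ and $L_Q$ bookkeeping: one must be careful that "abandonment" aggregates reneging and balking correctly and that the arrival-rate drop by $\varepsilon$ is accounted for exactly once. For the balking model in particular the positive-part truncation $(\lambda_Q - \delta(k-s))^+$ means the chain may be finite, and one should check the rate-conservation identity still closes without boundary leakage; the clean way is to argue directly from "flow into queue $=$ flow out of queue" state-by-state and sum, rather than guessing the closed form. Everything else — the product-form manipulation behind \eqref{pi_s}–\eqref{P_Q} and the algebra of \eqref{p} — is routine once the shared-state-$s$ decomposition is set up correctly, so I would spend most of the write-up making the $P_{ab}$/$L_Q$ flow arguments airtight and uniform across the reneging and balking cases.
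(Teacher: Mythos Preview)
Your proposal is correct and follows essentially the same approach as the paper: the decomposition identity \eqref{pi_s}--\eqref{P_Q} via the shared state $s$ (the paper invokes Kelly's conditional-distribution result rather than computing the normalizing constants $G_1,G_2,w_s$ explicitly, but for a reversible birth--death chain this is the same content), and flow-balance/rate-conservation arguments for $P_{ab}$ and $L_Q$ exactly as you outline. The paper's $P_{ab}$ step computes the conditional service throughput in sub-chain~2 as $s\mu_Q(1-\pi_s^2)$ and subtracts from $\lambda$ to get the conditional abandonment fraction, which is precisely the rate-conservation you sketch; the $L_Q$ identity $\lambda P_{ab}=\theta L_Q+\varepsilon\lambda P_Q$ is also identical to yours.
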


Among the performance indicators presented in Lemma \ref{str-rep}, we are most interested in the delay probability $P_Q$ and the abandonment probability $P_{ab}$ because a reduction of $P_Q$ yields higher quality of a service system, and a reduction of $P_{ab}$ leads to higher system throughput (i.e., the average number of customers being serviced per unit time: $X=\lambda \cdot (1-P_{ab})$) and thus higher efficiency of the system. However, it may not be possible to achieve a reduction of $P_Q$ and $P_{ab}$ at the same time. To determine if a simultaneous reduction is possible, we can use the following exact relationship, which holds for any modified Erlang A model including the original Erlang A model. In this corollary, we denote $\pi_s^1$ as $P_{block}$.

\begin{corollary}
\label{PQ-Pab}
$P_{Q}$ and $P_{ab}$ for the modified Erlang A model satisfy the following equation:
\begin{equation}
\label{PQ-Pab-Pblock}
P_{ab} =\dfrac{\left(p-P_{block} \right)P_Q +\left(1-p \right)P_{block}}{1-P_{block} }.
\end{equation}
\end{corollary}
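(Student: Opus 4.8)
The plan is to obtain \eqref{PQ-Pab-Pblock} directly from the structural representations in Lemma~\ref{str-rep} by elementary algebra, eliminating the blocking probability $\pi_s^2$ of sub-chain~2 in favor of $P_Q$ and rewriting the result in terms of $P_{block}=\pi_s^1$.

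First I would set $x \doteq 1/\pi_s^1 = 1/P_{block}$ and $y \doteq 1/\pi_s^2$, so that \eqref{pi_s} reads $1/\pi_s = x + y - 1$ and \eqref{P_Q} reads $P_Q = y/(x+y-1)$, i.e.\ $x + y - 1 = y/P_Q$. Substituting $x+y-1 = y/P_Q$ into the denominator of \eqref{P_AB2} gives $P_{ab} = P_Q\bigl(1 + p(y-1)\bigr)/y = pP_Q + (1-p)\,P_Q/y$; and since $P_Q/y = P_Q\,\pi_s^2 = \pi_s$ by \eqref{P_Q}, this collapses to the compact identity $P_{ab} = pP_Q + (1-p)\pi_s$.

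Next I would express $\pi_s$ through $P_Q$ and $P_{block}$. Solving $x+y-1 = y/P_Q$ for $y$ yields $y = P_Q(x-1)/(1-P_Q)$, hence $\pi_s = P_Q/y = (1-P_Q)/(x-1)$; inserting $x = 1/P_{block}$ gives $\pi_s = P_{block}(1-P_Q)/(1-P_{block})$. Substituting this into $P_{ab} = pP_Q + (1-p)\pi_s$ and collecting over the common denominator $1-P_{block}$, the $P_QP_{block}$ terms combine with coefficient $-[p+(1-p)] = -1$, producing precisely the numerator $(p-P_{block})P_Q + (1-p)P_{block}$ of \eqref{PQ-Pab-Pblock}.

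The manipulation is routine and I do not expect a genuine obstacle; the only point requiring a word of care is the division by $1-P_Q$ (equivalently by $x-1 = 1/P_{block}-1$) in the step solving for $y$, which is valid whenever $P_{block}<1$ and $P_Q<1$ — both true for any non-degenerate modified Erlang~A model (with $s\ge 1$, so sub-chain~1 is not absorbing, and positive-recurrent dynamics). The degenerate case $P_{block}=1$ (which forces $P_Q=1$) can be checked separately, or avoided entirely by clearing all denominators from the start and verifying \eqref{PQ-Pab-Pblock} as a polynomial identity in $1/\pi_s^1$, $1/\pi_s^2$ and $p$, which reduces the corollary to a purely formal computation.
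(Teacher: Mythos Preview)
Your proposal is correct and follows essentially the same approach as the paper: both proofs use the structural representations of Lemma~\ref{str-rep} and eliminate $\pi_s^2$ algebraically in favor of $P_Q$ and $P_{block}=\pi_s^1$. The only cosmetic difference is that you pass through the intermediate identity $P_{ab}=pP_Q+(1-p)\pi_s$ before substituting $\pi_s=P_{block}(1-P_Q)/(1-P_{block})$, whereas the paper plugs the solved expressions for $1/\pi_s^2$ and $1/\pi_s^2-1$ directly into \eqref{P_AB}.
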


From Corollary \ref{PQ-Pab}, we can determine how $P_Q$ and $P_{ab}$ are dependent on abandonment-related parameters, such as $\gamma$ (or $\delta$) and $\varepsilon$:

\begin{corollary}
\label{trade-off}
$P_Q$ and $P_{ab}$ for the modified Erlang A model satisfy the following properties:
\begin{enumerate}
\item $P_Q$ is a monotonically decreasing function of abandonment-related parameters.
\item $P_{ab}$ is a monotonically increasing (decreasing) function of abandonment-related parameters if $p<P_{block}$ ($p>P_{block}$, respectively). $P_{ab}$ is independent of all abandonment-related parameters if $p=P_{block}$.
\end{enumerate}
\end{corollary}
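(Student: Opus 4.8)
The plan is to reduce the whole statement to the monotone behavior of the single sub-chain blocking probability $\pi_s^2$, since every other quantity entering the structural identities of Lemma~\ref{str-rep} and Corollary~\ref{PQ-Pab} is insensitive to the abandonment-related parameters $\gamma$ (or $\delta$) and $\varepsilon$. First I would record this insensitivity: sub-chain~1 is an ordinary M/M/s/s chain whose coefficients involve only $\lambda$, $\mu$, $s$, so $P_{block}=\pi_s^1$ is an Erlang-B value, in particular constant in $\gamma,\delta,\varepsilon$ and strictly less than~1 for $s\ge 1$; and by \eqref{p} the quantity $p=-(1+\tau)a-\tau$ depends only on $\tau$ and $a=(s-R)/R$, hence is also constant in $\gamma,\delta,\varepsilon$.

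Step two is part~1. Writing the birth--death recursion for sub-chain~2 gives $1/\pi_s^2=\sum_{j\ge 0} q_j$ with $q_0=1$ and $q_j=\prod_{i=0}^{j-1}\lambda_{s+i}/\mu_{s+i+1}\ge 0$ (a finite sum in the balking model, where the birth rates eventually vanish). For the reneging model $q_j=\prod_{i=0}^{j-1}\lambda_Q/(s\mu_Q+(i+1)\gamma)$ and for the balking model $q_j=\prod_{i=0}^{j-1}(\lambda_Q-\delta i)^+/(s\mu_Q)$; in both cases each factor, and hence each $q_j$, is (weakly) nonincreasing in the relevant abandonment-related parameter --- in $\gamma$ and in $\varepsilon$ for reneging, in $\delta$ and in $\varepsilon$ for balking, using $\lambda_Q=(1-\varepsilon)\lambda$. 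Therefore $1/\pi_s^2$ is nonincreasing, i.e.\ $\pi_s^2$ is nondecreasing, in each abandonment-related parameter. Substituting into \eqref{P_Q} gives $P_Q=\dfrac{1/\pi_s^2}{(1/\pi_s^1-1)+1/\pi_s^2}$, which, since $1/\pi_s^1-1$ is a nonnegative constant, is an increasing function of $1/\pi_s^2$; composition then yields that $P_Q$ is nonincreasing in each abandonment-related parameter, proving part~1.

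Step three is part~2. By Corollary~\ref{PQ-Pab}, $P_{ab}=\dfrac{(p-P_{block})P_Q+(1-p)P_{block}}{1-P_{block}}$, and with $p,P_{block}$ fixed and $1-P_{block}>0$ this displays $P_{ab}$ as an affine function of $P_Q$ with slope $(p-P_{block})/(1-P_{block})$, whose sign equals the sign of $p-P_{block}$. Combining this with the monotonicity of $P_Q$ from part~1 gives precisely the three claimed cases: $P_{ab}$ increasing when $p<P_{block}$, decreasing when $p>P_{block}$, and independent of all abandonment-related parameters when $p=P_{block}$.

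The only nontrivial step is the term-by-term monotonicity of the sub-chain~2 partition sum, and the single point needing care there is the balking model: the positive-part truncation makes the $\delta$- and $\varepsilon$-dependence only weakly monotone and renders the chain finite, so the conclusions are stated as weak monotonicity. If strictness is wanted, I would note that the $j=1$ term $\lambda_Q/(s\mu_Q+\gamma)$ is strictly decreasing in $\gamma$ and in $\varepsilon$ (reneging), and the $j=2$ term is strictly decreasing in $\delta$ whenever $\delta<\lambda_Q$ (balking), so over the nondegenerate parameter range $P_Q$, and $P_{ab}$ when $p\ne P_{block}$, are strictly monotone. Everything past the $\pi_s^2$ monotonicity is just algebra on the already-established identities \eqref{P_Q} and \eqref{PQ-Pab-Pblock}.
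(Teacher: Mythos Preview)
Your proposal is correct and follows essentially the same approach as the paper: both establish part~1 by writing $1/\pi_s^2$ as the partition sum $1+\sum_{k\ge 1}\prod_{i=0}^{k-1}\lambda_{s+i}/\mu_{s+i+1}$ and observing term-by-term monotonicity in $\gamma,\delta,\varepsilon$ while $1/\pi_s^1-1$ stays fixed, then read off the monotonicity of $P_Q$ from \eqref{P_Q}; and both obtain part~2 by noting that $p$ and $P_{block}$ are constant in the abandonment-related parameters and reading the sign of the slope in \eqref{PQ-Pab-Pblock}. Your extra paragraph on strict versus weak monotonicity (especially the care in the balking case with the positive-part truncation) is a refinement not present in the paper's proof but does not change the argument's structure.
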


Corollary \ref{trade-off} says that if $p<P_{block}$, a trade-off exists and we need to control the level of abandonment in order to achieve an optimal balance between $P_Q$ and $P_{ab}$. In contrast, if $p>P_{block}$, both $P_Q$ and $P_{ab}$ can be minimized simultaneously by controlling the level of abandonment. The best practice in such a case would be to block all arrivals when all staff members are busy. This case could occur when customers in queue significantly slow down the service speed of the system (or in other words, when waiting customers bring large negative externalities to the system). A similar phenomenon can be observed in a congested traffic network \cite{braess2005paradox}.

We can derive an alternative expression (corollary) to Lemma \ref{str-rep} using the stationary probability $P_{Q-}$ of the system having customers in queue (i.e., the probability that the total number of customers in the system is greater than $s$):
$$P_{Q-} \doteq P_Q-\pi_s=\frac{\frac{1}{\pi _s^2}-1}{\frac{1}{\pi _s^1}+\frac{1}{\pi _s^2}-1}.$$

\begin{corollary}
\label{str-rep-P_Q-}
Performance indicators for the modified Erlang A model are also represented as follows:
\begin{align}
\label{P_Q-P_Q-}
P_Q &= \pi_s+P_{Q-},\\
\label{P_ab-P_Q-}
P_{ab} &=\pi_s+p \cdot P_{Q-},\\
\label{L_Q-P_Q-}
L_Q &= \frac{\lambda}{\theta} \cdot \left((1-\varepsilon)\pi_s+(p-\varepsilon) P_{Q-}\right).
\end{align}
\end{corollary}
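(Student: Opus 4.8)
The plan is to obtain all three identities by direct algebraic substitution of the structural representations in Lemma~\ref{str-rep}, carrying the quantity $D \doteq \frac{1}{\pi_s^1}+\frac{1}{\pi_s^2}-1$ as a common denominator throughout. First I would record the two facts used repeatedly: by \eqref{pi_s}, $\pi_s = 1/D$, and by \eqref{P_Q}, $P_Q = (1/\pi_s^2)/D$. Subtracting gives $P_{Q-} = P_Q-\pi_s = (1/\pi_s^2-1)/D$, which matches the closed form displayed just before the corollary; with this, \eqref{P_Q-P_Q-} is nothing more than the defining relation $P_{Q-}=P_Q-\pi_s$ rearranged.

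For \eqref{P_ab-P_Q-}, I would compute $\pi_s + p\,P_{Q-} = \frac{1}{D} + p\cdot\frac{1/\pi_s^2-1}{D} = \frac{1+p(1/\pi_s^2-1)}{D}$, which is precisely \eqref{P_AB2}. For \eqref{L_Q-P_Q-}, the cleanest route is to start from \eqref{L_Q}, namely $L_Q = \frac{\lambda}{\theta}(P_{ab}-\varepsilon P_Q)$, and substitute the two identities just established: $L_Q = \frac{\lambda}{\theta}\bigl((\pi_s+p\,P_{Q-}) - \varepsilon(\pi_s+P_{Q-})\bigr) = \frac{\lambda}{\theta}\bigl((1-\varepsilon)\pi_s + (p-\varepsilon)P_{Q-}\bigr)$. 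As a cross-check one can instead verify this directly against \eqref{L_Q2} by the same common-denominator manipulation.

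There is no genuine obstacle here; the content is entirely bookkeeping once Lemma~\ref{str-rep} is in hand. The only points requiring a little care are to treat $p$ as the fixed constant defined in \eqref{p} (so it plays no role when clearing the $\pi_s^i$-denominators) and to use $\pi_s = 1/D$ consistently wherever $\pi_s$ appears, so that every expression is reduced to a single fraction over $D$ before comparing numerators.
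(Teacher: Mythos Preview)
Your argument is correct and is exactly the natural verification: the paper does not spell out a separate proof of this corollary, since it follows immediately from the definition $P_{Q-}\doteq P_Q-\pi_s$ (displayed just before the corollary) together with Lemma~\ref{str-rep}. Your common-denominator bookkeeping with $D=\tfrac{1}{\pi_s^1}+\tfrac{1}{\pi_s^2}-1$ is the intended route, and the derivation of \eqref{L_Q-P_Q-} via \eqref{L_Q} and the two preceding identities is the cleanest way to close it.
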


\begin{remark}
Both Corollary \ref{str-rep-P_Q-} and Lemma \ref{str-rep} are exact and general. From Corollary \ref{str-rep-P_Q-} we can derive various formulae and approximations, such as performance indicators for the Erlang B and C models, and heavy-traffic approximation. See the Appendix for more discussion.
\end{remark}

\section{Non-asymptotic Poisson-Normal Approximation}\label{sec:perform}
\textcolor{black}{In this section we derive a convenient non-asymptotic representation for performance indicators. We start by expressing all blocking probabilities of sub-chains using a Poisson representation; these probabilities are then converted into a normal representation. Finally, we aggregate blocking probabilities to obtain a non-asymptotic normal representation for the performance indicators of the full system.}

\subsection{Poisson Representation of Blocking Probabilities}
The left sub-chain (an M/M/s/s queue) is a truncated M/M/$\infty$ queue, whose steady-state probabilities are proportional to a Poisson distribution. Using this property, the blocking probability of the left sub-chain, known as the Erlang  B (or Erlang Loss) formula, can be represented as a function of a Poisson CDF and PMF with rate parameter $R$ and index $s$. This index $s$ is implicitly assumed to be an integer because it corresponds to the staffing level. Similarly, the right sub-chain can also be regarded as a truncated M/M/$\infty$ queue if we rescale the rate parameter and index, where again the rescaled index is assumed to be an integer. 
Under this integer constraint, the blocking probability of the right sub-chain can also be represented as a function of a Poisson CDF and PMF. Note that the integer constraints for the staffing level (left sub-chain) and the rescaled staffing level (right sub-chain) are both dropped when we convert the Poisson representation to a normal representation, while the continuity correction terms need to be added to the normal representation in order to account for the inevitable error associated with this discrete-to-continuous Poisson to normal conversion.

\begin{table}
\begin{center}
\caption{PMF/PDF/CDF of R.V.'s}\label{tab:cdf}
\footnotesize
\begin{tabular}{ cccc }\noalign{\smallskip}
\hline \noalign{\smallskip}
& Poisson R.V.  &  Normal R.V. with mean $R$  &\  Standard Normal R.V. \\ \noalign{\smallskip}
& with mean $R$ &   and standard deviation $\sqrt{R}$ & \\ \hline \noalign{\smallskip}
R.V. & $X_P \sim Pois(R)$ & $X_N \sim N(R,R)$ & $Z \sim N(0,1)$ \\ \noalign{\smallskip}
CDF & $F_P(\cdot;R)$ & $F_N(\cdot;R,R)$ & $\Phi(\cdot)$ \\ \noalign{\smallskip}
PMF/PDF & $f_P(\cdot;R)$ & $f_N(\cdot;R,R)$ & $\phi(\cdot)$ \\ \noalign{\smallskip} \hline
\end{tabular}
\end{center}
\end{table}

To prepare for the derivation, we define new R.V.'s and functions in Table 1: Poisson, normal, and standard normal R.V.'s and associated probability mass function (PMF), probability density function (PDF), and cumulative distribution function (CDF). We introduce three Poisson R.V.'s: $X_P$ for the left sub-chain, $X'_P$ and $X''_P$ for the reneging and balking models' right sub-chains, respectively. The distribution of each Poisson R.V. is represented by its rate parameter and staffing level. Using the notation in Table \ref{tab:setup-poisson} and assuming non-negative integral staffing levels, we are able to obtain a Poisson representation for all blocking probabilities:

\begin{table}
\begin{center}
\caption{Setup for Poisson representation}\label{tab:setup-poisson}
\scriptsize
\begin{tabular}{ cccc }
\noalign{\smallskip} \hline \noalign{\smallskip}
Sub-chain & Poisson R.V.  & Rate Parameter  & Staffing level \\ \hline \noalign{\smallskip}
M/M/s/s & $X_P \sim Pois(R)$  & $R \doteq \dfrac{\lambda}{\mu}$  & $s$ \\ \noalign{\smallskip}
Reneging & $X'_P \sim Pois(R')$ &  $R' \doteq \dfrac{\lambda_Q}{\gamma} = (1-\varepsilon)(\mu/\gamma)R $ & $s' \doteq \dfrac{s\mu_Q}{\gamma}=\dfrac{\mu_Q}{\gamma} \cdot s $\\ \noalign{\smallskip}
Balking & $X''_P \sim Pois(R'')$ & $R'' \doteq \dfrac{s\mu_Q}{\delta}=\dfrac{(a+1)\mu_Q}{\delta} \cdot R $ & $s'' \doteq \dfrac{\lambda_Q}{\delta}=\dfrac{1-\varepsilon}{1+\tau} \cdot \dfrac{\mu_Q}{(a+1)\delta} \cdot s $ \\ \noalign{\smallskip} \hline \noalign{\smallskip}
\multicolumn {4} {l} {\emph{Note}: $\lambda _{Q} \doteq (1-\varepsilon )\lambda $ and $\mu _{Q} \doteq (1+\tau )\mu $. From $a \doteq (s-R)/R$, $s=(a+1)R$ holds.}
\end{tabular}\\
\end{center}
\end{table}

\begin{lemma}
\label{block-poisson}
For any (possibly rescaled) positive rate parameters ($R$, $R'$, $R''$) and non-negative integral staffing levels ($s$, $s'$, $s''$), inverse blocking probabilities for the left sub-chain (sub-chain 1) and the right sub-chain (sub-chain 2) are exactly represented as follows:
\begin{enumerate}

\item M/M/s/s sub-chain (left sub-chain): 
\begin{equation}
\frac{1}{\pi _{s}^{1} } =\frac{F_{P}(s;R)}{f_{P}(s;R)},
\end{equation}

\item Reneging sub-chain (right sub-chain):
\begin{equation}
\frac{1}{\pi _{s}^{2} } = 1+\frac{1-F_P(s':R')}{f_P(s':R')},
\end{equation}

\item Balking sub-chain (alternate right sub-chain):
\begin{equation}
\frac{1}{\pi _{s}^{2} } = \frac{F_{P}(s'';R'')}{f_{P}(s'';R'')}.
\end{equation}

\end{enumerate}
\end{lemma}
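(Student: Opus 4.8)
The plan is to derive each of the three identities from the known structure of birth–death chains, exploiting the fact that both sub-chains are finite truncations of an $M/M/\infty$ queue whose stationary distribution is (a truncation of) a Poisson law. For a birth–death chain on states $0,1,\dots,n$ with birth rates $\lambda_k$ and death rates $\mu_k$, the stationary probabilities satisfy $\pi_k \propto \prod_{j=1}^{k} \lambda_{j-1}/\mu_j$, so $1/\pi_n = \sum_{k=0}^{n} \pi_k/\pi_n = \sum_{k=0}^{n}\prod_{j=k+1}^{n}\mu_j/\lambda_{j-1}$. First I would apply this to the left sub-chain: on states $0,\dots,s$ it has arrival rate $\lambda$ and death rate $k\mu$ at state $k$, so the unnormalized weights are $R^k/k!$, i.e. proportional to the Poisson$(R)$ PMF $f_P(k;R)$. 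Summing from $k=0$ to $s$ and dividing by the weight at $s$ gives $1/\pi_s^1 = \bigl(\sum_{k=0}^{s} f_P(k;R)\bigr)/f_P(s;R) = F_P(s;R)/f_P(s;R)$, which is exactly the (inverse) Erlang~B formula. This step is routine once the Poisson identification is made.

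Next I would handle the reneging sub-chain. Here the relevant states are $s, s+1, s+2, \dots$; re-index by $i = k-s \ge 0$. The arrival rate in every state of this sub-chain is $\lambda_Q$, and the death rate at state $k=s+i$ is $s\mu_Q + i\gamma$. Thus the ratio of consecutive unnormalized weights $w_{i+1}/w_i = \lambda_Q/(s\mu_Q + (i+1)\gamma)$. Factoring out $\gamma$ from the denominator, $w_{i+1}/w_i = (\lambda_Q/\gamma)/(s\mu_Q/\gamma + i + 1) = R'/(s' + i + 1)$, so $w_i \propto (R')^{\,i}\,s'!/(s'+i)!$, which is proportional to $f_P(s'+i; R')$ (the Poisson$(R')$ PMF evaluated at integer $s'+i$ — this is exactly where the integrality of $s'$ is used). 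Since the sub-chain state $s$ corresponds to $i=0$, we get $1/\pi_s^2 = \sum_{i\ge 0} w_i/w_0 = \bigl(\sum_{j \ge s'} f_P(j;R')\bigr)/f_P(s';R') = \bigl(1 - F_P(s'-1;R')\bigr)/f_P(s';R')$. A small bookkeeping point on the index convention — whether $F_P(s';\cdot)$ denotes $\Pr(X_P \le s')$ or $\Pr(X_P \le s'-1)$ — needs to be reconciled with the paper's convention so that the ``$1 + (1-F_P(s';R'))/f_P(s';R')$'' form comes out; the isolated leading ``$1$'' presumably accounts for the $i=0$ term being split off or for an off-by-one in the CDF index, and I would verify this against the definition used in Table~\ref{tab:cdf}.

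For the balking sub-chain the structure is dual: the death rate is constant at $s\mu_Q$ in every state $k=s+i$, while the arrival rate at state $s+i$ is $\lambda_Q - \delta i$ (truncated at zero). So $w_{i+1}/w_i = (\lambda_Q - \delta i)/(s\mu_Q)$; writing $\lambda_Q - \delta i = \delta(\lambda_Q/\delta - i) = \delta(s'' - i)$ and $s\mu_Q = \delta R''$, this becomes $(s''-i)/R''$, giving $w_i \propto (R'')^{-i}\, s''!/(s''-i)!$, i.e. proportional to $(R'')^{\,s''-i}/(s''-i)!$, which is $f_P(s''-i; R'')$ up to a constant — again integrality of $s''$ is what makes this a genuine Poisson PMF, and the finite support ($0 \le i \le s''$, forced by the truncation of $\lambda_k$ at zero) matches the range of the sum. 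Summing over $i$ from $0$ to $s''$ and dividing by the $i=0$ term yields $1/\pi_s^2 = \bigl(\sum_{j=0}^{s''} f_P(j;R'')\bigr)/f_P(s'';R'') = F_P(s'';R'')/f_P(s'';R'')$, structurally identical to the $M/M/s/s$ case (as one expects, since a linear-balking queue behind a fast single ``super-server'' is itself an Erlang-B–type loss system).

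The main obstacle I anticipate is not any single calculation but getting all the rescalings in Table~\ref{tab:setup-poisson} to line up consistently: one must check that $R'$, $s'$ (reneging) and $R''$, $s''$ (balking) are precisely the rate/index pairs for which the consecutive-weight ratios collapse to the clean forms $R'/(s'+i+1)$ and $(s''-i)/R''$, and that the resulting truncated geometric-type weights are genuinely proportional to Poisson PMFs at the claimed indices. The second delicate point is the integrality caveat: the derivation produces $s'!/(s'+i)!$ and $s''!/(s''-i)!$, which are only factorial ratios — and hence only honest Poisson PMF ratios — when $s'$ and $s''$ are integers; the lemma's hypothesis of integral (rescaled) staffing levels is exactly what licenses replacing these ratios by $f_P(s'+i;R')/f_P(s';R')$ etc. I would flag that this is the sole place integrality enters, foreshadowing the continuity-correction discussion that follows when the Poisson representation is later converted to a normal one.
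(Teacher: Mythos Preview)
Your proposal is correct and follows essentially the same route as the paper: in each of the three cases you identify the consecutive-weight ratios, recognize the resulting weights as (truncated) Poisson PMF values at the rescaled indices, and normalize to obtain the inverse blocking probability. Your one point of hesitation --- the ``$1+$'' in the reneging formula --- is resolved exactly as you guessed: with the paper's convention $F_P(s';R')=\Pr\{X'_P\le s'\}$, one has $\Pr\{X'_P\ge s'\}=f_P(s';R')+\bigl(1-F_P(s';R')\bigr)$, so splitting off the $j=s'$ term before dividing by $f_P(s';R')$ produces the leading $1$.
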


To convert these Poisson representations to normal, we need to find the relationship between the Poisson CDF/PMF and the normal CDF/PDF, which is discussed next.

\subsection{Normal Representation of Blocking Probabilities}
A Poisson distribution is well approximated by the normal distribution with the same mean and standard deviation as long as the mean (rate parameter) of the Poisson is sufficiently large. This approximation is based on the Central Limit Theorem. Using this property, we can convert the Poisson CDF/PMF to the normal CDF/PDF, and ultimately to the standard normal CDF/PDF.

For convenience, we call ${f_P(s;R)}/(1-F_P(s;R))$ the ``modified'' hazard function of the Poisson distribution\footnote{We use the term ``modified" because the hazard function for a discrete R.V. is usually defined slightly differently as $\Pr(X_P=s)/(1-\Pr(X_P \leq {s-1}))$. See \cite{pinedo2012scheduling}.} and $h(x) \doteq \phi(x)/(1-\Phi(x)) (=\phi(-x)/\Phi(-x))$ the hazard function of the standard normal distribution. We represent the conversion formulae in two different ways: using $R$ and $c$ and also using $a$ and $c$. The former is suitable when making a non-asymptotic analysis (a finite $R$ case) and the latter is suitable when making an asymptotic analysis (an infinite $R$ case).

\begin{proposition}
\label{Poisson-Normal}
For a sufficiently large $R$ and a non-negative integer $s$, the Poisson distribution with mean (rate parameter) $R$ and index $s$ and the ``modified'' hazard function of the Poisson distribution are well-approximated by the standard normal:
\begin{equation}
{F_P}(s;R) \approx \Phi (c_{s;R} + \Delta_R),
\end{equation}
\begin{equation}
{f_P}(s;R) \approx \frac{\phi (c_{s;R} + \Delta_R)}{\sqrt R} = \frac{a_{s;R}\cdot \phi (c_{s;R} + \Delta_R)}{c_{s;R}},
\end{equation}
\begin{equation}  
\frac{f_P(s;R)}{1-F_P(s;R)} \approx \frac{h(c_{s;R} + \Delta_R)}{\sqrt R} = \frac{a_{s;R}\cdot h(c_{s;R} + \Delta_R)}{c_{s;R}},
\end{equation}
and
\begin{equation}
\frac{f_P(s;R)}{F_P(s;R)} \approx \frac{h(-c_{s;R} - \Delta_R)}{\sqrt R} = \frac{a_{s;R}\cdot h(-c_{s;R} - \Delta_R)}{c_{s;R}}.
\end{equation}
\end{proposition}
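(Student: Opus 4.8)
The plan is to establish the four approximations by starting from a single ``master'' continuity-corrected normal approximation to the Poisson CDF and then deriving the PMF and (modified) hazard function statements as algebraic consequences. First I would recall the standard fact that for a Poisson random variable $X_P \sim Pois(R)$ with $R$ large, $F_P(s;R) = \Pr(X_P \le s)$ is well-approximated by $\Phi\!\left((s - R)/\sqrt{R}\right)$ by the Central Limit Theorem (the sum of $R$ i.i.d.\ unit-rate Poissons, viewed along integer $R$, and interpolated otherwise). To get the sharper statement with $\Delta_R$, I would invoke the classical continuity correction: approximating a lattice distribution by a continuous one incurs an $O(1/\sqrt{R})$ shift, so $F_P(s;R) \approx \Phi\!\left((s - R + \tfrac12 + \text{(skewness term)})/\sqrt{R}\right)$, and I would simply \emph{define} $\Delta_R$ to be exactly the correction term $\big((s-R+\tfrac12)/\sqrt R\big) - c_{s;R}$ plus whatever higher-order (Edgeworth) adjustment the paper has chosen to bundle into $\Delta_R$; then the first display $F_P(s;R) \approx \Phi(c_{s;R} + \Delta_R)$ holds by construction. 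The two equivalent right-hand sides in each display are pure algebra: since $c_{s;R} = (s-R)/\sqrt R$ and $a_{s;R} = (s-R)/R$, we have $a_{s;R}/c_{s;R} = 1/\sqrt R$, so every ``$1/\sqrt R$'' can be rewritten as ``$a_{s;R}/c_{s;R}$'' with no further work.

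Next I would handle the PMF. The cleanest route is to write $f_P(s;R) = F_P(s;R) - F_P(s-1;R)$ and apply the CDF approximation at both $s$ and $s-1$, giving $f_P(s;R) \approx \Phi(c_{s;R}+\Delta_R) - \Phi(c_{s-1;R}+\Delta_{R})$; since the argument moves by $1/\sqrt R$, a first-order Taylor expansion of $\Phi$ yields $f_P(s;R) \approx \phi(c_{s;R}+\Delta_R)/\sqrt R$, which is the claimed formula (the error from the Taylor step being absorbed in the same order as the continuity correction already present). An alternative, perhaps cleaner, derivation is the direct local CLT / Stirling estimate: $f_P(s;R) = e^{-R}R^s/s! \approx \frac{1}{\sqrt{2\pi R}} e^{-(s-R)^2/(2R)} = \phi\!\big((s-R)/\sqrt R\big)/\sqrt R$, and then one notes the continuity-correction shift matches the one used for the CDF so the argument becomes $c_{s;R}+\Delta_R$. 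Either way, the $a/c$ form again follows from $1/\sqrt R = a_{s;R}/c_{s;R}$.

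The remaining two displays are then immediate. For the ``modified'' Poisson hazard function, divide the PMF approximation by $1 - F_P(s;R) \approx 1 - \Phi(c_{s;R}+\Delta_R)$ to get $f_P(s;R)/(1-F_P(s;R)) \approx \phi(c_{s;R}+\Delta_R)/\big(\sqrt R\,(1-\Phi(c_{s;R}+\Delta_R))\big) = h(c_{s;R}+\Delta_R)/\sqrt R$, using the definition $h(x) = \phi(x)/(1-\Phi(x))$ given just above the proposition. For the last display, divide the PMF by $F_P(s;R) \approx \Phi(c_{s;R}+\Delta_R) = 1 - \Phi(-(c_{s;R}+\Delta_R))$, and use the stated identity $h(x) = \phi(-x)/\Phi(-x)$ to rewrite $\phi(c_{s;R}+\Delta_R)/\Phi(c_{s;R}+\Delta_R)$ as $h(-c_{s;R}-\Delta_R)$. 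In each case multiply/divide by $a_{s;R}/c_{s;R} = 1/\sqrt R$ to reach the second stated form.

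I expect the main obstacle to be purely a matter of \emph{bookkeeping rather than mathematics}: being precise about what ``$\approx$'' means and exactly which error terms are folded into $\Delta_R$. Since the proposition says ``well-approximated'' without a quantitative error bound, the honest proof is to treat $\Delta_R$ as the name for the (known) continuity/Edgeworth correction and verify that the \emph{same} $\Delta_R$ works simultaneously in the CDF, PMF, and both hazard-ratio statements — i.e.\ that differentiating or dividing the CDF approximation does not force a different correction to leading order. The one genuinely substantive point to check is that the $O(1/\sqrt R)$ error incurred when passing from $F_P$ to $f_P$ (via differencing and Taylor expansion) is of the \emph{same order} as the correction already absorbed in $\Delta_R$, so that no new term needs to be introduced; this is a standard fact from the local limit theorem / Edgeworth expansion for lattice distributions, and I would cite it rather than rederive it. The algebraic equivalences between the $\sqrt R$-form and the $a/c$-form are trivial and deserve at most one line.
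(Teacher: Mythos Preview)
Your proposal is correct and follows essentially the same route as the paper: continuity-corrected CLT for the CDF, differencing $F_P(s;R)-F_P(s-1;R)$ plus a first-order Taylor step for the PMF, and straight division for the two hazard ratios, with the $a/c$ equalities being the one-line algebra you describe. The only simplification you are missing is that in this paper $\Delta_R$ is \emph{defined} to be exactly $0.5/\sqrt{R}$ (Table~\ref{tab:setup-normal}), i.e.\ the bare half-integer continuity correction with no Edgeworth or skewness term bundled in, so your bookkeeping worry about which correction is folded into $\Delta_R$ evaporates.
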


Observe in Proposition \ref{Poisson-Normal} that while the Poisson representation requires $s$ to be a non-negative integer, the normal representation does not. However, to account for the errors caused by the Poisson-to-normal (discrete-to-continuous) conversion, a continuity correction term $\Delta_R$ appears in the normal representation. This correction term $\Delta_R$ is non-negligible when $R$ is small (for example, if $R$ is around 10), but diminishes to zero if $R$ is large. Note that Proposition \ref{Poisson-Normal} without the $\Delta_R$ term is equivalent to the Poisson-to-normal conversion formulae seen in several textbooks (for example, see \cite{tijms2003first} and \cite{harchol2013performance}). However, we are interested in both small and large $R$, so we maintain $\Delta_R$ in our formulae.

\begin{remark}
Proposition \ref{Poisson-Normal} relies on the Central Limit Theorem that is applied to the sum, $R$, of i.i.d. Poisson R.V.'s with mean 1. Thus, if $R$ is very small, the approximation becomes imprecise. If we need more accurate results for a smaller $R$ case, we can use the Wilson-Hilferty approximation \cite{Wilson01121931,lesch2009some}. According to this approximation, we can convert the Poisson CDF and PMF as follows:
$F_P(s;R) \approx \Phi(z(s,R)),$
$f_P(s;R) \approx \Phi(z(s,R))-\Phi(z(s-1,R)),$
where $z(s,R) \doteq \frac{\sqrt[3]{\frac{R}{s+1}}-1+\frac{1}{9(s+1)}}{\frac{1}{3\sqrt{s+1}}}.$
While this approximation does not support an asymptotic analysis, it is very accurate even for $R=1$ and easy to calculate using a spreadsheet. Hence, the non-asymptotic formulae based on the Wilson-Hilferty approximation provide an important alternative to the non-asymptotic formulae that are based on the Central Limit Theorem for systems with very small means.
\end{remark}

We are now ready to derive the blocking probabilities in a standard normal representation. By combining Lemma \ref{block-poisson} and Proposition \ref{Poisson-Normal} with the notation defined in Table \ref{tab:setup-normal}, we obtain the following lemma. (Note that in this lemma, all integer constraints have been dropped.)

\begin{table}
\begin{center}
\caption{Notation for standard normal representation}\label{tab:setup-normal}
\scriptsize
\begin{tabular}{ ccccc }
\noalign{\smallskip} \hline \noalign{\smallskip}
Sub-chain  & Standard Normal R.V.  & Square-root Coef & Linear Coef & Continuity Correction \\ \hline \noalign{\smallskip}
M/M/s/s & $Z \sim N(0,1)$  & $c \doteq c_{s;R}$  & $a  \doteq a_{s;R}$ & $\Delta  \doteq \Delta_R$ \\ \noalign{\smallskip}
Reneging & $Z' \sim N(0,1)$ & $c'  \doteq c_{s';R'}$ & $a'  \doteq a_{s';R'}$ & $\Delta'  \doteq \Delta_{R'}$\\ \noalign{\smallskip}
Balking & $Z'' \sim N(0,1)$ & $c''  \doteq c_{s'';R''}$ & $a''  \doteq a_{s'';R''}$ & $\Delta''  \doteq \Delta_{R''}$ \\ \noalign{\smallskip} \hline \noalign{\smallskip}
\multicolumn {5} {l} {\emph{Note}: $a_{s;R}  \doteq (s - R)/R$, $c_{s;R}  \doteq (s - R)/\sqrt R$, $\Delta_R  \doteq 0.5/\sqrt R$.}
\end{tabular}\\
\end{center}
\end{table}
 
\begin{lemma}
\label{block-normal}
For sufficiently large (at least around 10) rate parameters ($R$, $R'$, $R''$), inverse blocking probabilities of sub-chains are approximated by the hazard function for the standard normal distribution as follows:

\begin{enumerate}

\item M/M/s/s sub-chain (left sub-chain):  
\begin{equation}
\frac{1}{\pi _{s}^{1} } \approx \frac{\sqrt {R}}{h(-c-\Delta)} \text{ or } \dfrac{1}{a\cdot h(-c-\Delta)/c},
\label{pi_s^1}
\end{equation}  
\item Reneging sub-chain(right sub-chain):
\begin{equation}  
\frac{1}{\pi _{s}^{2} }  \approx 1+\frac{\sqrt {R'}}{h(c'+\Delta ')} \text{ or } 1+\dfrac{1}{a'\cdot h(c'+\Delta ')/c'},
\label{pi_s^2r}
\end{equation}  
\item Balking sub-chain (alternate right sub-chain):
\begin{equation}  
\frac{1}{\pi _{s}^{2} }  \approx \frac{\sqrt {R''}}{h(-c''-\Delta '')} \text{ or } \dfrac{1}{a''\cdot h(-c''-\Delta '')/c''}.
\label{pi_s^2b}
\end{equation}  
\end{enumerate}
\end{lemma}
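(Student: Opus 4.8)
The plan is to obtain Lemma~\ref{block-normal} by composing the exact Poisson identities of Lemma~\ref{block-poisson} with the conversion formulae of Proposition~\ref{Poisson-Normal}, instantiated once for each sub-chain with the appropriate (possibly rescaled) rate parameter and index taken from Tables~\ref{tab:setup-poisson} and~\ref{tab:setup-normal}. No genuinely new estimate is needed; the content is essentially bookkeeping, and the only delicate point is the integrality of the rescaled staffing levels, which I would treat at the end.

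First, for the M/M/s/s (left) sub-chain, Lemma~\ref{block-poisson} gives $1/\pi_s^1 = F_P(s;R)/f_P(s;R)$, which is exactly the reciprocal of the fourth ratio appearing in Proposition~\ref{Poisson-Normal}. Substituting $f_P(s;R)/F_P(s;R) \approx h(-c-\Delta)/\sqrt R$, with $c=c_{s;R}$ and $\Delta=\Delta_R$, and inverting yields $1/\pi_s^1 \approx \sqrt R / h(-c-\Delta)$. The alternative form follows from the identity $\sqrt R = c/a$, immediate from $Ra = \sqrt R\, c = s-R$, so that $\sqrt R / h(-c-\Delta) = 1/\bigl(a\, h(-c-\Delta)/c\bigr)$.

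Next, for the reneging (right) sub-chain, Lemma~\ref{block-poisson} gives $1/\pi_s^2 = 1 + \bigl(1-F_P(s';R')\bigr)/f_P(s';R')$. Applying Proposition~\ref{Poisson-Normal} with $(R,s)$ replaced by $(R',s')$, the modified hazard conversion reads $f_P(s';R')/\bigl(1-F_P(s';R')\bigr) \approx h(c'+\Delta')/\sqrt{R'}$; inverting the second summand and restoring the standalone $1$ gives $1/\pi_s^2 \approx 1 + \sqrt{R'}/h(c'+\Delta')$, and $\sqrt{R'} = c'/a'$ again supplies the alternative form. The sign convention matters here: it is the upper-tail hazard $h(c'+\Delta')$ rather than $h(-c'-\Delta')$ that appears, because $1-F_P$ (not $F_P$) sits in the numerator. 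For the balking sub-chain, $1/\pi_s^2 = F_P(s'';R'')/f_P(s'';R'')$ is structurally identical to the left-sub-chain case with $(R,s)$ replaced by $(R'',s'')$, so the computation of the previous paragraph gives $1/\pi_s^2 \approx \sqrt{R''}/h(-c''-\Delta'') = 1/\bigl(a''\, h(-c''-\Delta'')/c''\bigr)$.

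The main obstacle, and the only step that is not pure substitution, is that Lemma~\ref{block-poisson} and Proposition~\ref{Poisson-Normal} both presuppose a non-negative \emph{integer} index, whereas the rescaled staffing levels $s' = s\mu_Q/\gamma$ and $s'' = \lambda_Q/\delta$ are in general not integers. I would argue this is immaterial for the normal representation: the right-hand sides in Proposition~\ref{Poisson-Normal} are smooth functions of the real index, the continuity-correction term was introduced precisely to absorb the discrete-to-continuous passage, and replacing $s'$ (resp.\ $s''$) by its nearest integer perturbs $c'$ (resp.\ $c''$) by $O(1/\sqrt{R'})$ (resp.\ $O(1/\sqrt{R''})$), a quantity of the same order as $\Delta'$ (resp.\ $\Delta''$) and hence already within the declared accuracy. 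Equivalently, the renormalized stationary law of the right birth--death sub-chain obeys a local central limit theorem whether or not the rescaling produces an integer index, so the normal formula is the natural approximation in every case, and the integrality hypothesis of Lemma~\ref{block-poisson} is merely a device for writing an \emph{exact} Poisson form. Granting this, the three computations above assemble directly into the stated approximations, which proves the lemma.
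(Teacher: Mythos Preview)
Your proposal is correct and matches the paper's own approach exactly: the paper states that Lemma~\ref{block-normal} follows by ``combining Lemma~\ref{block-poisson} and Proposition~\ref{Poisson-Normal} with the notation defined in Table~\ref{tab:setup-normal},'' and simply notes that the integer constraints are dropped. Your treatment is in fact more careful than the paper's, since you explicitly justify why the integrality of $s'$ and $s''$ is immaterial at the stated level of accuracy.
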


\subsection{Non-asymptotic Normal Representation of Performance Indicators}
We derive the non-asymptotic normal representation and corresponding staffing rule for the modified Erlang A model. For this purpose, we first find the relationships among key parameters of sub-chains: square-root coefficients, linear coefficients, and rate parameters. To simplify expressions, we introduce a parameter $a_Q \doteq \frac{s-R_Q}{R}=a+\frac{R-R_Q}{R}=a+\frac{\varepsilon+\tau}{1+\tau}$. Using Tables~\ref{tab:setup-poisson} and~\ref{tab:setup-normal}, we obtain Table~\ref{tab:normal-parameter}. Combining Lemma~\ref{str-rep} and Lemma~\ref{block-normal} with the help of Table~\ref{tab:normal-parameter}, we can derive the non-asymptotic normal representation of $\pi_s$ and $P_{Q-}$ as shown in Table \ref{tab:norm-rep}. $P_Q$ and $P_{ab}$ are obtained from these $\pi_s$ and $P_{Q-}$ using Corollary~\ref{str-rep-P_Q-}. Note that these non-asymptotic formulae cannot be represented by a single parameter $c$ or $a$ in contrast to the case for the popular Erlang A square-root staffing rule since the non-asymptotic formulae depend on the size of the system (thus, requiring any two of the three parameters: $a$, $c$, and $R\:(=(c/a)^2)$).
\begin{table}
\begin{center}
\caption{Parameters characterizing normal representation}\label{tab:normal-parameter}
\scriptsize
\begin{tabular}{ cccc }
\hline \noalign{\smallskip}
Sub-chain & Square-root Coefficient & Linear Coefficient & Rate Parameter\\ \hline \noalign{\smallskip}
M/M/s/s & $c\:(=a \sqrt R)$  & $a\:(=c/\sqrt R)$ & $R\:\left(=(c/a)^2\right)$\\ \noalign{\smallskip}
Reneging & $c'=\dfrac{a_Q}{a} \sqrt{\dfrac{1+\tau}{1-\varepsilon}} \sqrt{\dfrac{\mu_Q}{\gamma}}\cdot c$  & $a'=\dfrac{1+\tau}{1-\varepsilon} \cdot a_Q$ & $R'=\dfrac{1-\varepsilon}{1+\tau} \cdot \dfrac{\mu_Q}{\gamma} R=\dfrac{(1-\varepsilon)\mu}{\gamma}R$\\ \noalign{\smallskip}
Balking & $c''=-\dfrac{a_Q}{a}\sqrt{\dfrac{\mu_Q}{(a+1)\delta}}\cdot c$  & $a''=- \dfrac{a_Q}{a+1} $ & $R''=\dfrac{(a+1)\mu_Q}{\delta}R$\\ \hline \noalign{\smallskip}
\multicolumn {4} {l} {\emph{Note}: $a_Q=a$ and $(1+\tau)/(1-\varepsilon)=1$ hold if $\varepsilon+\tau=0$ (i.e., $R=R_Q$).}
\end{tabular}\\ 
\end{center}
\end{table}

\begin{table}
\begin{center}
\caption{Non-asymptotic normal representation of the Modified Erlang A model}\label{tab:norm-rep}
\scriptsize
\begin{tabular}{ ccc }
\hline \noalign{\smallskip}
Performance &  &   \\
Indicator & Reneging system & Balking system  \\ \hline \noalign{\smallskip}
$\pi_s$ & $\frac{\dfrac{1}{\sqrt R}}{\dfrac{1}{h(-c-\Delta)}+ \dfrac{\sqrt{(1-\varepsilon)\mu/\gamma}}{h(c'+\Delta')}}$  &  $\dfrac{\dfrac{1}{\sqrt R}}{\dfrac{1}{h(-c-\Delta)}+ \dfrac{\sqrt{(a+1)\mu_Q/\delta}}{h(-c''-\Delta'')}-\dfrac{1}{\sqrt R}}$  \\ \noalign{\smallskip} \noalign{\smallskip} \noalign{\smallskip}
$P_{Q-}$ & $\frac{\dfrac{\sqrt{(1-\varepsilon)\mu/\gamma}}{h(c'+\Delta')}}{\dfrac{1}{h(-c-\Delta)}+ \dfrac{\sqrt{(1-\varepsilon)\mu/\gamma}}{h(c'+\Delta')}}$  &  $\dfrac{\dfrac{\sqrt{(a+1)\mu_Q/\delta}}{h(-c''-\Delta'')}-\dfrac{1}{\sqrt R}}{\dfrac{1}{h(-c-\Delta)}+ \dfrac{\sqrt{(a+1)\mu_Q/\delta}}{h(-c''-\Delta'')}-\dfrac{1}{\sqrt R}}$ \\  \noalign{\smallskip} \hline  \noalign{\smallskip}
\end{tabular}
\end{center}
\end{table}

The decision rule for the optimal number of staff that satisfies a specific QoS target is obtained as a direct application of the non-asymptotic normal representation of performance indicators:

\begin{proposition}
\label{sq-root-staff}
Assume that a specific QoS requirement is either $P_{Q} <\alpha \left(<1\right)$ or $P_{ab} <\alpha \left(<1\right)$, where the forms of $P_Q$ and $P_{ab}$ are specified as $P_Q=\pi_s+P_{Q-}$ and $P_{ab}=\pi_s+p \cdot P_{Q-}$ using $\pi_s$ and $P_{Q-}$ in Table \ref{tab:norm-rep}. Then, given $R$, the minimum number of staff $s_{\alpha;R}$ that is required to guarantee the QoS requirement is obtained as $s_{\alpha;R} = \lceil {R+c_{\alpha;R} \sqrt{R}} \rceil$ ($ s_{\alpha;R} = \lceil {R+a_{\alpha;R} R} \rceil$), where $c_{\alpha;R}$ ($a_{\alpha;R}$, respectively) is the unique solution to either $P_Q=\alpha$ or $P_{ab}=\alpha$.
\end{proposition}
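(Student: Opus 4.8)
The plan is to regard $P_Q$ and $P_{ab}$ --- assembled from the Table~\ref{tab:norm-rep} expressions for $\pi_s$ and $P_{Q-}$ via $P_Q=\pi_s+P_{Q-}$ and $P_{ab}=\pi_s+p\,P_{Q-}$ --- as functions of a single real variable: the relaxed staffing level $s$, equivalently $c=(s-R)/\sqrt R$ or $a=(s-R)/R$, since for fixed $R$ these three are strictly increasing affine reparametrizations of one another. Because the standard-normal hazard $h$ is smooth and strictly increasing, $P_Q$ and $P_{ab}$ are smooth in $s$, and the proposition splits into three ingredients: (i) $P_Q$ and $P_{ab}$ are strictly decreasing in $s$; (ii) they are continuous, with $\sup_s P_Q=\sup_s P_{ab}=1$ attained in the limit $s\to 0^{+}$ and $P_Q,P_{ab}\to 0$ as $s\to\infty$, so that for each $\alpha<1$ the intermediate value theorem produces a root $c_{\alpha;R}$ of $P_Q=\alpha$ (resp.\ $P_{ab}=\alpha$), unique by (i) and admissible (i.e.\ $>-\sqrt R$); and (iii) the staffing level is an integer, so $P_Q<\alpha$, which by (i) is equivalent to $s>R+c_{\alpha;R}\sqrt R$, holds exactly when $s\ge s_{\alpha;R}=\lceil R+c_{\alpha;R}\sqrt R\rceil$ --- and the same argument in the variable $a$ (with $c_{\alpha;R}\sqrt R=a_{\alpha;R}R$) gives $s_{\alpha;R}=\lceil R+a_{\alpha;R}R\rceil$.

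For (i) applied to $P_Q$, I would not differentiate the Table~\ref{tab:norm-rep} quotients directly but instead use the structural identity from Lemma~\ref{str-rep}, $P_Q=\bigl(1+(1/\pi_s^1-1)\,\pi_s^2\bigr)^{-1}$: since $1/\pi_s^1-1\ge 0$ and $\pi_s^2\ge 0$, it suffices to show both factors increase in $s$. From Lemma~\ref{block-normal}, $1/\pi_s^1-1=\sqrt R/h(-c-\Delta)-1$ increases strictly because $h$ is strictly increasing and $-c-\Delta$ strictly decreases in $s$. For the reneging sub-chain $1/\pi_s^2=1+\sqrt{R'}/h(c'+\Delta')$ with $R'$ independent of $s$ and $c'=(s'-R')/\sqrt{R'}$, $s'=s\mu_Q/\gamma$ (Table~\ref{tab:setup-poisson}), so $c'$ increases and hence $\pi_s^2$ increases. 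For the balking sub-chain the cleanest route is the Poisson form in Lemma~\ref{block-poisson}: the rescaled index $s''=\lambda_Q/\delta$ does not depend on $s$ while $R''=s\mu_Q/\delta$ increases, and $F_P(s'';R'')/f_P(s'';R'')=\sum_{0\le j\le s''}\prod_{i=j+1}^{s''}(i/R'')$ is, term by term, strictly decreasing in $R''$; thus $\pi_s^2$ again increases, and the normal representation inherits this. The same monotonicity of $h$ yields the continuity and boundary limits needed in (ii).

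The one genuinely delicate point is (i) for $P_{ab}$, because the heavy-traffic level $p=-(1+\tau)a-\tau$ itself varies with $s$: writing $P_{ab}=\pi_s(1+pV)/(1+A+V)$ with $A=1/\pi_s^1-1$ and $V=1/\pi_s^2-1$ (equivalently $P_{ab}=\bigl(p+(1-p)\pi_s^2\bigr)P_Q$) yields a derivative of indeterminate sign as soon as $p<0$, i.e.\ $s>R/(1+\tau)$. My plan is to sidestep this through $P_{ab}=\varepsilon P_Q+(\theta/\lambda)L_Q$: since $\varepsilon\ge 0$ and $P_Q$ is already decreasing, it is enough to show the expected queue length $L_Q$ is strictly decreasing in $s$. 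This $L_Q$ monotonicity is intuitively clear --- adding a server can only shorten the queue --- and I expect to make it rigorous either by a birth--death stochastic-comparison/coupling argument on the full chain, or, staying within the paper's framework, by writing $L_Q=(\lambda/\theta)\bigl((1-\varepsilon)\pi_s+(p-\varepsilon)P_{Q-}\bigr)$ and dominating the positive $s$-dependence of $p$ by the decrease of $\pi_s$ and $P_{Q-}$ using $h(x)=x+x^{-1}+O(x^{-3})$ on the relevant large arguments; this is the step I anticipate will demand the most care. Once (i) is secured for both $P_Q$ and $P_{ab}$, ingredients (ii) and (iii) are routine, and the $a$-variable argument delivers the parenthetical form of the staffing rule.
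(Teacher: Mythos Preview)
Your plan is substantially more careful than the paper's own proof, which is a three-sentence appeal to intuition: it simply asserts ``without calculation'' that both the exact and the non-asymptotic normal versions of $P_Q$ and $P_{ab}$ decrease strictly from $1$ to $0$ as $s$ ranges over $[0,\infty)$, invokes this to get existence and uniqueness of $c_{\alpha;R}$ (or $a_{\alpha;R}$), and then takes the ceiling to respect integrality. No structural identity, no hazard monotonicity, no treatment of the $s$-dependence of $p$ appears; the paper effectively imports monotonicity from the underlying stochastic model and declares that the approximation inherits it.

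So your ingredients (i)--(iii) are exactly the right skeleton, and your observation that $P_{ab}=\pi_s+p\,P_{Q-}$ is \emph{not} obviously decreasing once $p$ moves with $s$ is a genuine subtlety the paper glosses over. Two remarks on your proposed resolution. First, the coupling/stochastic-comparison route for $L_Q$ establishes monotonicity of the \emph{exact} queue length, not of the Table~\ref{tab:norm-rep} surrogate; that is the same leap the paper makes, so if you want to be strictly more rigorous you should stick to your second route and work directly with the normal formulas and the hazard asymptotics. Second, for your $P_Q$ argument via $P_Q^{-1}=1+(1/\pi_s^1-1)\pi_s^2$, note that the normal approximation can give $1/\pi_s^1<1$ for very small $s$ (since $\sqrt{R}/h(-c-\Delta)\to\sqrt{R}/h(\sqrt{R}-\Delta)\approx 1$ as $s\to 0$), so ``both factors nonnegative and increasing'' may fail at the left edge; this does not affect the proposition for $\alpha<1$ because the relevant root lies well inside the region where $1/\pi_s^1>1$, but it is worth flagging if you present the monotonicity as global.
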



\section{Asymptotic Normal Approximation}\label{sec:asymptotic}
In this section we derive analytical expressions for the asymptotic properties of the modified Erlang A model in each regime (i.e., QD, QED, or ED) and obtain phase diagrams of service regimes in the asymptotic limit. We see that under CBC, the modified Erlang A model has a wider QED regime than the original Erlang A model; this wider QED regime is described by a linear staffing rule (parameterized by $a$) as opposed to the square-root staffing rule (parameterized by $c$) of the original Erlang A model.

\subsection{Asymptotic Normal Representation of Performance Indicators}
Before we start the derivation, recall that in Lemma \ref{str-rep}, performance indicators are represented by inverse blocking probabilities of sub-chains. \textcolor{black}{We thus need to find the asymptotic limit of each blocking probability on an appropriate scale. Notice that each sub-chain exhibits its congestion property--either saturated or idling--depending on whether the staffing level (number of servers or staff members) is below or above the sub-chain's scale parameter, respectively. Since each sub-chain is indexed by its own scale parameter ($R$ for the left sub-chain and $R_Q$ for the right sub-chain), under the assumption ${R_Q \le R}$, we can identify that, in the asymptotic limit of large systems, the modified Erlang A model as a whole should exhibit a QD regime when we maintain ${s<R_Q}$ (both sub-systems are mostly idling) or an ED regime when we maintain $R<s$ (both sub-systems are mostly saturated). The most interesting case, leading to the QED regime, is when (1) ${R_Q < s < R}$ (under the condition that ${R_Q<R}$ or equivalently, $\varepsilon+\tau > 0$), in which the left sub-chain is mostly saturated while the right sub-chain is mostly idling, or (2) ${s \approx R=R_Q}$  (under the condition that ${R_Q=R}$ or equivalently, ${\varepsilon+\tau = 0}$), in which both sub-chains are neither idling nor saturated.}

\textcolor{black}{For the first~(${R_Q<s<R}$) case, since there is a wider region in which $s$ achieve the QED regime, we utilize a linear staffing representation and express blocking probabilities using a linear coefficient~$a$; see Table \ref{tab:asymptotic limit of inverse blocking probabilities} for the analytical results of inverse blocking probabilities on a linear scale. For the second~(${s \approx R=R_Q}$) case, to explain the congestion properties of large systems, we utilize a finer, square-root staffing representation, and express blocking probabilities using a square-root coefficient~$c$; see Table \ref{tab:asymptotic limit of inverse blocking probabilities at a phase boundary} for the analytical results of inverse blocking probabilities on a square-root scale.}

By plugging the inverse blocking probabilities in Tables~\ref{tab:asymptotic limit of inverse blocking probabilities} and~\ref{tab:asymptotic limit of inverse blocking probabilities at a phase boundary} into Equations~\eqref{P_Q} and~\eqref{P_AB2} in Lemma~\ref{str-rep}, the asymptotic representation of performance indicators for the modified Erlang A model is derived in Table \ref{tab:asymptotic representation of Erlang A}. Note that~$\phi(c)$ in Table~\ref{tab:asymptotic representation of Erlang A} is defined as
\begin{equation}
\phi(c) \doteq \dfrac{\frac{\sqrt{\mu_Q/\theta}}{h \left(\sqrt{\mu_Q/\theta}\cdot c \right)}}{\frac{1}{h \left(-c\right)}+\frac{\sqrt{\mu_Q/\theta}}{h \left(\sqrt{\mu_Q/\theta}\cdot c \right)}},
\label{phi}
\end{equation}
where $\theta=\gamma$ (or $\delta$) for a reneging (or balking) system.

\textcolor{black}{To summarize this subsection, in the QED regime, we obtain a square-root staffing rule when both sub-chains change their congestion properties at the same threshold~${s \approx R=R_Q}$, while we obtain a linear staffing rule when the left sub-chain is mostly saturated and the right sub-chain is mostly idling when~${R_Q <s< R}$.}

\begin{table}
\begin{center}
\caption{Asymptotic limit of blocking probabilities on a linear scale when $R_Q<R$}\label{tab:asymptotic limit of inverse blocking probabilities}
\scriptsize
\begin{tabular}{ cccc }
\hline\noalign{\smallskip}
sub-chain & $0 \leq s < R_Q$ & $R_Q<s<R$ & $R<s$  \\ \hline\noalign{\smallskip}
M/M/s/s& $a<0, c \to -\infty, \dfrac{1}{\pi_s^1} \to -\dfrac{1}{a}$ & same as the left & $a>0, c \to +\infty, \dfrac{1}{\pi_s^1} \to +\infty$ \\ \noalign{\smallskip}
Reneging& $a'<0, c' \to -\infty, \dfrac{1}{\pi_s^2} \to +\infty$ & same as the right & $a'>0, c' \to +\infty, \dfrac{1}{\pi_s^2} \to \dfrac{a+1}{a_Q}$\\ \noalign{\smallskip}
Balking& $a''>0, c'' \to +\infty, \dfrac{1}{\pi_s^2} \to +\infty$ & same as the right & $a''<0, c'' \to -\infty, \dfrac{1}{\pi_s^2} \to \dfrac{a+1}{a_Q}$\\\noalign{\smallskip}\hline \noalign{\smallskip}
\end{tabular}\\
\end{center}
\end{table}

\begin{table}
\begin{center}
\caption{Asymptotic limit of blocking probabilities on a square-root scale when $R=R_Q$}\label{tab:asymptotic limit of inverse blocking probabilities at a phase boundary}
\scriptsize
\begin{tabular}{ cccc }
\hline\noalign{\smallskip}
sub-chain & $0 \leq s < R_Q$ & $s \approx R=R_Q$ ($s=R+c\sqrt R$) & $R<s$  \\ \hline\noalign{\smallskip}
M/M/s/s& *&$c$, $a \to 0$, $ \dfrac{1}{\pi_s^1} \to +\infty$, $\dfrac{a}{\pi_s^1} \to \dfrac{1}{h(-c)/c}$& * \\ \noalign{\smallskip}
Reneging& *&$c'=\sqrt{\dfrac{\mu_Q}{\gamma}} \cdot c$, $a' \to 0$, $\dfrac{1}{\pi_s^2} \to +\infty$, $\dfrac{a}{\pi_s^2} \to \dfrac{1}{h(c')/c'}$& * \\ \noalign{\smallskip}
Balking& *&$c''\to -\sqrt{\dfrac{\mu_Q}{\delta}} \cdot c$, $a'' \to 0$, $\dfrac{1}{\pi_s^2} \to +\infty$, $\dfrac{a}{\pi_s^2} \to -\dfrac{1}{h(-c'')/c''}$& * \\ \noalign{\smallskip} \hline \noalign{\smallskip}
\multicolumn {4} {l} {\emph{Note}: The symbol * indicates the same convergence properties as Table \ref{tab:asymptotic limit of inverse blocking probabilities} except that $a_Q=a$ holds }\\
\multicolumn {4} {l} {in this table (Table \ref{tab:asymptotic limit of inverse blocking probabilities at a phase boundary}), where $R=R_Q$ (i.e., $\varepsilon+\tau=0$ and thus $a_Q=a$) is assumed.}
\end{tabular}\\
\end{center}
\end{table}

\begin{table}
\begin{center}
\caption{Asymptotic representation of the modified Erlang A}\label{tab:asymptotic representation of Erlang A}
\scriptsize
\begin{tabular}{ ccccccccc }
\hline\noalign{\smallskip}
&&ED Regime&& \multicolumn {3} {c} {QED Regime}&& QD Regime\\ \cline{3-3} \cline{5-7} \cline{9-9} \noalign{\smallskip}
Perf && $0 \leq s < R_Q$ && $R_Q<s<R$ && $s \approx R$ ($s=R+c\sqrt R$)&& $R<s$ \\ \noalign{\smallskip}
Ind &&  && (when $R_Q<R$) && (when $R_Q=R$) &&  \\ \hline \noalign{\smallskip}
$P_Q $&& 1 && $\dfrac{1}{1-\dfrac{a_Q}{a}}=\dfrac{1-\dfrac{s}{R}}{1-\dfrac{R_Q}{R}}$ && $\phi(c)$&&0 \\ \noalign{\smallskip} \noalign{\smallskip}
$P_{ab} $&& $p$ && $\dfrac{\varepsilon}{1-\dfrac{a_Q}{a}}=\dfrac{\varepsilon\left(1-\dfrac{s}{R}\right)}{1-\dfrac{R_Q}{R}}$ && $\varepsilon \phi(c)$ && 0 \\ \noalign{\smallskip} \hline \noalign{\smallskip}
\multicolumn {9} {l} {\emph{Notes}: $p \doteq 1-(1+\tau)(a+1)$, $a_Q \doteq \frac{s-R_Q}{R}=a+\frac{\varepsilon+\tau}{1+\tau}$.}
\end{tabular}\\
\end{center}
\end{table}

\begin{remark}
\textcolor{black}{The process to derive asymptotic formulae shows two important distinctions between small systems and large systems. First, $\pi_s$ in Corollary~\ref{str-rep-P_Q-} (non-asymptotic representation) is dropped in Table~\ref{tab:asymptotic representation of Erlang A} (asymptotic representation) because $\pi_s$ in Table~\ref{tab:norm-rep} approaches zero as $R$ increases. Second, all continuity correction terms in Table~\ref{tab:norm-rep} become negligible as $R$ increases. These observations imply that inaccuracies for asymptotic formulae are due to the discrete state space of Markov chains. For small systems, the discreteness of the state space of Markov chain impacts the values of performance indicators, while for large systems we can ignore the contribution from any single state since its contribution is negligible when systems are large. We examine the difference between non-asymptotic and asymptotic formulae using numerical experiments in Section~\ref{sec:numerical}.}
\end{remark}

\begin{remark}
Although the modified Erlang A reneging and balking models have different non-asymptotic representations as shown in Table~\ref{tab:norm-rep}, their asymptotic representations exactly match in Table~\ref{tab:asymptotic representation of Erlang A}. This result implies that both (exponential) reneging and (linear) balking asymptotically contribute to delay and abandonment probabilities in the same way, although they are often discussed in different contexts. This coincidence is intuitively explained as follows: If the size of a system increases, the discreteness a system exhibits becomes smaller, continuity correction terms diminish, $\pi_s$ diminishes, and the time intervals between reneging (or balking) diminish. In this case, given $\gamma=\delta$, there is no difference in the impact of reneging and balking to the QoS properties ($P_Q$ and $P_{ab}$) of the system because customers constantly exit from the system with the same rate proportional to the size of the queue (i.e., a fluid approximation applies). However, note that the delay ($W_Q$) is different between reneging and balking systems; a balking system has a lower $W_Q$ because fewer customers join the system than a reneging system.
\end{remark}

\subsection{Phase Diagram}
\textcolor{black}{To understand the congestion properties of the modified Erlang A model in the asymptotic limit of large systems, it is convenient to observe the phase diagram of the three service regimes. In this phase diagram we do not distinguish between reneging and balking models since they share the same asymptotic results. We use an index~${1-\frac{R_Q}{R}}$ to represent the level of intervention, which ranges from~0 (no intervention case when~${\frac{R_Q}{R}=\frac{1-\varepsilon}{1+\tau}=1}$) to~1 (full intervention case when~${R_Q=1-\varepsilon=0}$). The ``no intervention'' case corresponds to the CBC scheme with~${\varepsilon+\tau=0}$, which includes the original Erlang~A model~(${\varepsilon=\tau=0}$) as a special case. The ``full intervention'' case corresponds to the CBC scheme with $\varepsilon=1$ (customers' arrivals are blocked when all servers are full), which makes the modified Erlang~A model equivalent to the Erlang B model. The phase boundaries of asymptotic service regimes in these diagrams are~${s=R}$ and~${s=R_Q\:(=\frac{1-\varepsilon}{1+\tau}R \leq R)}$ since the modified Erlang~A model contains two sub-systems indexed by resource requirements~$R$ and~$R_Q$. Thus, for a given level of intervention~${1-\frac{R_Q}{R}}$, a system achieves QED in the asymptotic limit when the staffing level (measured in $R$) is ${\frac{R_Q}{R}\:(=\frac{1-\varepsilon}{1+\tau})<\frac{s}{R}<\frac{R}{R}\:(=1)}$ (see Figure~\ref{fig:PD-General-StaffingLevel}), or equivalently, when the traffic intensity is ${\frac{R}{R}\:(=1)<\frac{R}{s}<\frac{R}{R_Q}\:(=\frac{1+\tau}{1-\varepsilon})}$ (see Figure~\ref{fig:PD-General-TrafficIntensity}). For simplicity of presentation, we denote the modified Erlang~A with~${\varepsilon+\tau=0}$ (${R=R_Q}$) as the Erlang~Ae in Figures~\ref{fig:PD-General-StaffingLevel} and~\ref{fig:PD-General-TrafficIntensity}. Erlang~A~(${\varepsilon=\tau=0}$) is a special case of Erlang~Ae in this case.}
\begin{figure}[H]
\centering
{\includegraphics*[scale=0.20]{./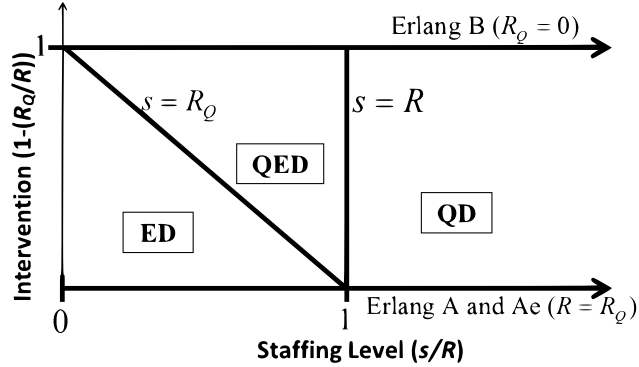}}
\caption{Phase diagram (staffing level representation) for the modified Erlang A model.}\label{fig:PD-General-StaffingLevel}
\end{figure}
\begin{figure}[H]
\centering
{\includegraphics*[scale=0.20]{./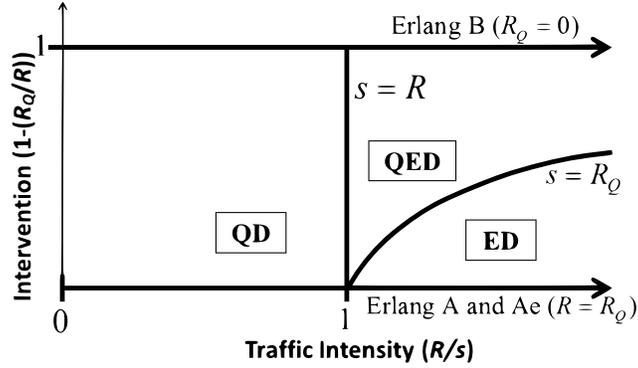}}
\caption{Phase diagram (traffic intensity representation) for the modified Erlang A model.}\label{fig:PD-General-TrafficIntensity}
\end{figure}

\textcolor{black}{These phase diagrams not only reveal asymptotic service regimes according to a given staffing level and intervention, but also provide visual information about robustness (i.e., sensitivities to changes in parameters). Specifically, we observe in Figures~\ref{fig:PD-General-StaffingLevel} and~\ref{fig:PD-General-TrafficIntensity} that $s=R$ when $R=R_Q$ (no intervention case; the bottom edge of the phase diagram) is a singular point, where all three regimes co-exist at a single point. This is the reason why the control of the staffing level~$s$ needs to be delicate (and requires a square-root staffing rule) to achieve QED for the original Erlang A model; as pointed out by \cite{whitt2006sensitivity}, a small change in traffic intensity~${{R}/{s}\:(={\lambda}/{(s\mu)})}$ due to changes in parameters ($\lambda$, $\mu$, and $s$) can incur a large change in the delay probability $P_Q$ (and thus changes the regime the system belongs to). However, such a delicate control may not be necessary since we know that there exists a wider QED region above the no intervention line (${R=R_Q}$) in phase diagrams. In other words, we can expand the QED regime by implementing the CBC scheme to make $R_Q<R$, avoiding the operation of large systems at a singular point $s\approx R=R_Q$. For practitioners struggling to maintain stable operation of large-scale abandonment systems in the QED regime, it would be easier to control $\varepsilon$ and $\tau$ appropriately to realize~${R_Q<R}$ and make systems robust, than to constantly control~$s$ around~$R=R_Q$ following the conventional or refined square-root staffing rules.}

\section{Numerical Experiments}\label{sec:numerical}
\textcolor{black}{In this section we demonstrate the precision of our non-asymptotic normal approximation, identify the source of discrepancies between non-asymptotic and asymptotic approximations, and observe the robustness of the system with customer abandonment under the CBC scheme.}

\subsection{Comparison among Exact Result and Non-Asymptotic/Asymptotic Approximations}
We first discuss the $R=R_Q$ case. Specifically, we consider the original Erlang A (${\varepsilon=\tau=0}$) case since the popular square-root staffing rule is available. \textcolor{black}{(Note: Our asymptotic representation of $P_Q$ for the $\varepsilon=\tau=0$ case is identical to the popular square-root staffing rule for the original Erlang~A model: $P_Q=\phi(c)$, where $\phi(c)$ is defined in Equation~\eqref{phi}. See also Table~\ref{tab:asymptotic representation of Erlang A}.)} Table~\ref{tab:GMR} shows the difference between the exact staffing levels and the staffing levels derived from the non-asymptotic staffing rule (Proposition \ref{sq-root-staff}) and the square-root staffing rule for the original Erlang A model. The staffing levels derived from the non-asymptotic staffing rule usually agree with the exact staffing levels with a difference of at most 1 at all levels of $\gamma$, while the staffing levels derived from the square-root staffing rule disagree with the exact staffing levels by two or more when $\gamma=10$.

\begin{table}[H]
\begin{center}
\caption{\textcolor{black}{Staffing levels to meet the target $P_Q$ following the exact, non-asymptotic, and square-root staffing rules ($R=R_Q$ case).}}\label{tab:GMR}
\scriptsize
\begin{tabular}{ ccccc }
\hline\noalign{\smallskip}
&&& \multicolumn {2} {c} {Difference from Exact Staffing Level} \\
\cline{4-5} \noalign{\smallskip}
$\gamma$ & Target $P_Q$ level ($\alpha$) & Exact Staffing Level ($s$) & Non-Asymptotic & Square-root Staffing\\ \hline \noalign{\smallskip}
10& 95\% & 20 & $-1$ & $-8$ \\ 
    & 83\% & 30 & 0 & $-5$ \\
    & 60\% & 40 & +1 & $-2$ \\
    & 30\% & 50 & 0 & $-2$  \\ \noalign{\smallskip} \hline \noalign{\smallskip}
1& 95\% & 40 & $-1$ & $-1$ \\ 
    & 83\% & 44 & 0 & 0 \\
    & 60\% & 49 & 0 & +1 \\
    & 30\% & 55 & 0 & $-1$  \\ \noalign{\smallskip} \hline \noalign{\smallskip}
0.1& 95\% & 48 & 0 & 0 \\
    & 83\% & 50 & 0 & 0 \\
    & 60\% & 52 & 0 & 0 \\ 
    & 30\% & 56 & 0 & 0  \\ \noalign{\smallskip} \hline \noalign{\smallskip}
\multicolumn {5} {l} {\emph{Notes}: $\lambda=50$, $\mu=1$, ${\varepsilon=\tau=0}$, $R=R_Q=50$}
\end{tabular}\\
\end{center}
\end{table}

Figure~\ref{fig:GMR-a} compares the numerical results of the non-asymptotic and square-root staffing approximations with the exact values using the same parameters shown in Figure 4 of \cite{garnett2002designing}. As \cite{garnett2002designing} show, the square-root staffing approximation matches well with the exact values at $\gamma=0.1$ and $\gamma=1$, but not at $\gamma=10$. In contrast, our non-asymptotic approximation traces the exact values for all cases.
\begin{figure}[H]
\centering
{\includegraphics*[scale=0.45]{./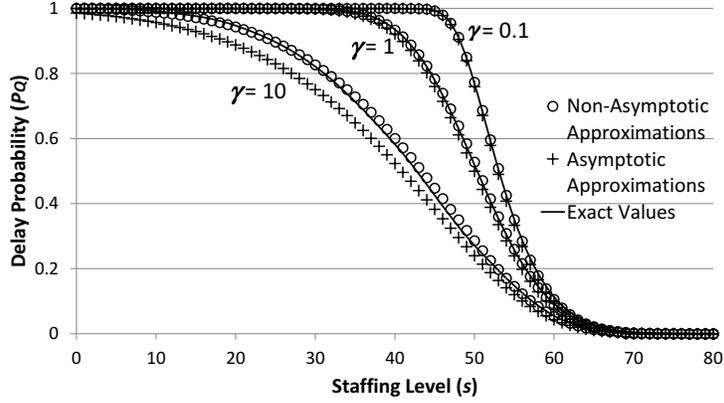}}
\caption{\textcolor{black}{Comparison between the exact values and non-asymptotic/square-root staffing approximations of $P_Q$ ($R=R_Q$ case). \emph{Notes}: $\lambda=50$, $\mu=1$, $\varepsilon=\tau=0$, $R=R_Q=50$.}}\label{fig:GMR-a}
\end{figure}

Figure~\ref{fig:GMR-b} shows the difference of the delay probability ($P_Q$) between our non-asymptotic approximation and the square-root staffing rule. This difference is primarily accounted for by the term $\pi_s$, which only appears in the non-asymptotic representation of $P_Q$ (except for the continuity correction terms $\Delta$ and $\Delta'$, whose contributions are smaller than $\pi_s$). (See the Appendix for the derivation.) The discrepancy term $\pi_s$ could become large when $R$ is small and $\gamma$ is large, in which case $P_Q$ of the square-root staffing rule could have a larger error. On the other hand, in the asymptotic limit of a large $R$, both $\pi_s$ and the continuity correction terms are regarded as negligible (because no discreteness exists), in which case the square-root staffing rule becomes precise.
\begin{figure}[H]
\centering
{\includegraphics*[scale=0.45]{./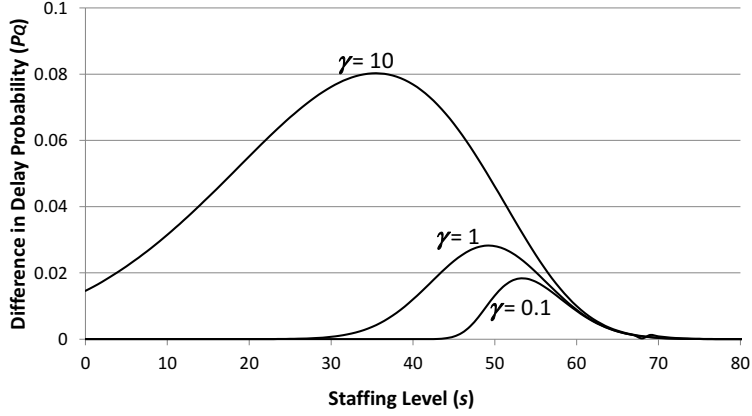}}
\caption{\textcolor{black}{Impact of reneging rate $\gamma$ on the \emph{difference} between non-asymptotic and square-root staffing approximations of $P_Q$  ($R=R_Q$ case). \footnotesize \emph{Notes}: $\lambda=50$, $\mu=1$, $\varepsilon=\tau=0$, $R=R_Q=50$. Ripples at around $s=69$ are due to numerical inaccuracy.}}\label{fig:GMR-b}
\end{figure}

\textcolor{black}{Figure \ref{fig:GMR-c} shows that as $R$ increases, the non-asymptotic approximation approaches the asymptotic approximation, as expected.}
\begin{figure}[H]
\centering
{\includegraphics*[scale=0.4]{./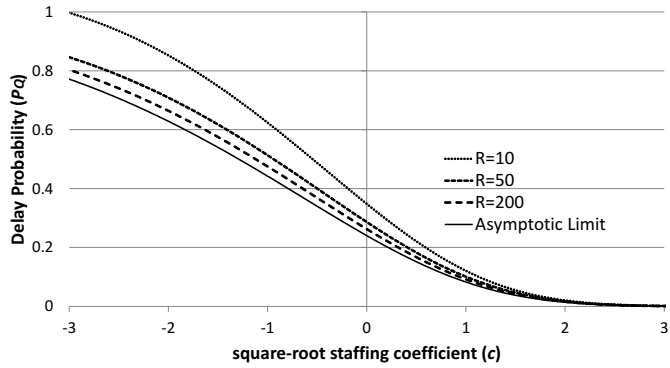}}
\caption{\textcolor{black}{Comparison between non-asymptotic and asymptotic approximations of $P_Q$ (square-root scale; $R=R_Q$ case). \emph{Notes}: $\mu=1$, $\gamma=10$, $\lambda=R=$10, 50, 200, and $\varepsilon=\tau=0$.}}\label{fig:GMR-c}
\end{figure}

\textcolor{black}{Next, we compare the exact values of $P_Q$ with non-asymptotic approximations of $P_Q$ when $R \neq R_Q$ ($\varepsilon+\tau \neq 0$). Table~\ref{tab:GMR-d} shows both absolute error (denoted as Abs.; defined as $|exact - approximation|$) and relative error (denoted as Rel.; defined as $(exact-approximation)/exact$ in \%). Note that we only consider non-asymptotic approximation in this table since the square-root staffing rule is not applicable when $R \neq R_Q$ and the linear staffing approximation is crude to compare with the exact values. Table~\ref{tab:GMR-d} demonstrates that our non-asymptotic approximation is accurate for a wide range of $s$ (from the ED to the QD regime). We observe a larger relative error for a larger $s\:(>R=50)$; however, this is not important since this large relative error is caused by $P_Q$ approaching zero as $s$ increases.}

\begin{table}[H]
\begin{center}
\caption{\textcolor{black}{Comparison between the exact results and non-asymptotic approximations of $P_Q$.}}\label{tab:GMR-d}
\scriptsize
\begin{tabular}{ccccccccccc}
\hline\noalign{\smallskip}
$\varepsilon$, $\tau$ & $s$          & 20       & 30       & 40       & 50      & 60        & 70        & 80        & Average & Max \\ \noalign{\smallskip} \hline \noalign{\smallskip}
0, 0         & Exact      & 1.00    & 1.00    & 0.94    & 0.52   & 0.09     & 0.00     & 0.00     &         &         \\
            & Non-Asym & 1.00     & 1.00    & 0.93    & 0.53   & 0.09     & 0.00     & 0.00     &         &         \\
            & Abs.         & 0.00    & 0.00    & 0.00    & 0.01   & 0.00     & 0.00     & 0.00     & 0.002    & 0.009    \\ 
                        & Rel.\%     & 0.00  & 0.07 & 0.22 & -1.79 & 5.15  & 37.37 & 76.73 &         &         \\
\noalign{\smallskip} \hline \noalign{\smallskip}
0, 0.2       & Exact       & 1.00    & 0.99    & 0.79    & 0.35   & 0.06     & 0.00     & 0.00     &         &         \\
            & Non-Asym  & 1.00    & 0.99    & 0.80    & 0.36   & 0.06     & 0.00     & 0.00     &         &         \\
            & Abs.        & 0.00    & 0.00    & 0.01    & 0.01   & 0.00     & 0.00     & 0.00     & 0.003    & 0.008    \\
                        & Rel.\%     & 0.00  & 0.18 & -0.92  & -2.38 & 5.89  & 37.70 & 76.80 &         &         \\
 \noalign{\smallskip} \hline \noalign{\smallskip}
0.2, 0       & Exact      & 1.00    & 0.97    & 0.73    & 0.32   & 0.06     & 0.00     & 0.00     &         &         \\
            & Non-Asym & 1.00       & 0.97    & 0.74    & 0.33   & 0.05     & 0.00     & 0.00     &         &         \\
            & Abs.         & 0.00    & 0.00    & 0.01    & 0.01   & 0.00     & 0.00     & 0.00     & 0.003    & 0.010    \\
                        & Rel.\%    & 0.02 & 0.19 & -1.34  & -2.56 & 5.86  & 37.66 & 76.79 &         &         \\
 \noalign{\smallskip} \hline \noalign{\smallskip}
0.2, 0.2     & Exact        & 1.00    & 0.91    & 0.59    & 0.24   & 0.05     & 0.00     & 0.00     &         &         \\
            & Non-Asym    & 1.00    & 0.91    & 0.61    & 0.25   & 0.04     & 0.00     & 0.00     &         &         \\
            & Abs.           & 0.00    & 0.00    & 0.01    & 0.01   & 0.00     & 0.00     & 0.00     & 0.003    & 0.012    \\
                        & Rel.\%       & 0.07 & -0.32  & -2.03  & -2.32 & 6.34  & 37.86 & 86.37 &         &         \\
 \noalign{\smallskip} \hline \noalign{\smallskip}
0.2, 0.5     & Exact       & 0.99    & 0.80    & 0.48    & 0.20   & 0.04     & 0.00     & 0.00     &         &         \\
            & Non-Asym    & 0.99    & 0.81    & 0.49    & 0.20   & 0.03     & 0.00     & 0.00     &         &         \\
            & Abs.         & 0.00    & 0.01    & 0.01    & 0.00   & 0.00     & 0.00     & 0.00     & 0.003    & 0.011    \\
                        & Rel.\%       & 0.08 & -1.07  & -2.19  & -2.04 & 8.81  & 38.19 & 76.92 &         &         \\
 \noalign{\smallskip} \hline \noalign{\smallskip}
0.5, 0.2     & Exact       & 0.92    & 0.68    & 0.40    & 0.16   & 0.03     & 0.00     & 0.00     &         &         \\
            & Non-Asym    & 0.93    & 0.69    & 0.41    & 0.17   & 0.02     & 0.00     & 0.00     &         &         \\
            & Abs.       & 0.00    & 0.01    & 0.01    & 0.00   & 0.01     & 0.00     & 0.00     & 0.005    & 0.012   \\
                        & Rel.\%   & -0.45  & -1.60  & -2.37  & -2.02 & 37.96 & 38.19 & 76.92 &         &         \\
 \noalign{\smallskip} \hline \noalign{\smallskip}
            \multicolumn {11} {l} {\emph{Notes}: \textcolor{black}{$\lambda=50$, $\mu=1$, $R=50$. Note that $R \neq R_Q$ except for the first case ($\varepsilon=\tau=0$).}}
\end{tabular}\\
\end{center}
\end{table}

\subsection{Robustness of the Modified Erlang A Model}
\textcolor{black}{In this final subsection, we discuss how the CBC scheme--blocking arrivals ($\varepsilon$) and increasing service rate ($\tau$)--affects the robustness of the modified Erlang~A system. We first observe the impacts of $\varepsilon$ and $\tau$ on $P_Q$ when fixing the size of the system. Figure~\ref{fig:PQ-under-CBC} shows that $\varepsilon$ has larger impact on $P_Q$ than $\tau$, partly due to the higher marginal reduction of $R_Q=\frac{1-\varepsilon}{1+\tau}R$ for $\varepsilon$ than $\tau$. This result is consistent with our intuition that blocking arrivals is more effective than increasing service rates of servers.}

\begin{figure}[H]
\centering
{\includegraphics*[scale=0.45]{./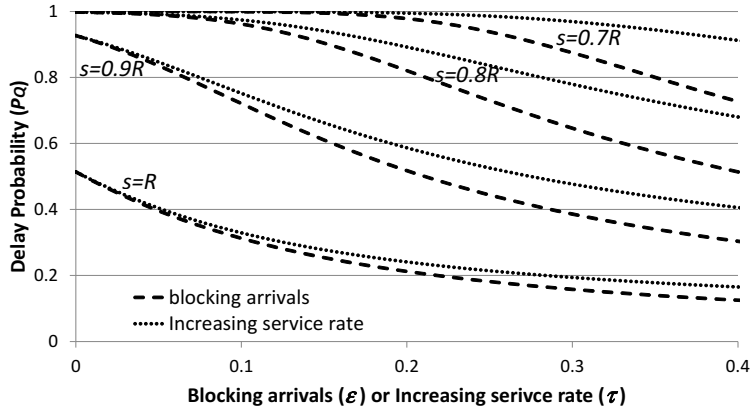}}
\caption{\textcolor{black}{Comparison between the impacts of blocking arrivals~($\varepsilon$) and increasing service rate~($\tau$). \emph{Notes}: $\mu=1$, $\gamma=1$, $\lambda=R=200$, and $s/R=0.7, 0.8, 0.9, 1.$}}\label{fig:PQ-under-CBC}
\end{figure}

\textcolor{black}{We next fix the CBC scheme parameters ($\varepsilon$ and $\tau$) and examine the robustness of systems by changing their sizes. Specifically, we change two parameters, either~$s$ or~$\lambda$, and plot~$P_Q$ values; a flatter~$P_Q$ line implies that a system is more robust. We first alter the staffing level $s$ while fixing other parameters and observe the $P_Q$ plots. Figures~\ref{fig:Non-Asym-PQ-for-ErlangA} and~\ref{fig:Non-Asym-PQ-for-ErlangAE} correspond to the modified Erlang A models with no intervention ($\varepsilon=\tau=0$) and with intervention ($\varepsilon=0.1,\tau=0.05$), respectively. Notice that the asymptotic lines of $P_Q$ are different in these two figures: In Figure~\ref{fig:Non-Asym-PQ-for-ErlangA} ($R_Q=R$ case), $P_Q$ approaches a step function at $s=R$ as $R$ increases, indicating that the system operating in a QED regime is very sensitive to parameters, while in Figure~\ref{fig:Non-Asym-PQ-for-ErlangAE} ($R_Q<R$ case), $P_Q$ approaches a linear asymptotic line $P_Q=\frac{1-\frac{s}{R}}{1-\frac{R_Q}{R}}=\frac{1+\tau}{\varepsilon+\tau}\left(1-\frac{s}{R}\right)$ for $\frac{R_Q}{R}\le \frac{s}{R}\le 1$ as $R$ increases. These two different asymptotic lines for $P_Q$ are presented in Table~\ref{tab:asymptotic representation of Erlang A}. Note that in Table~\ref{tab:asymptotic representation of Erlang A}, only a square-root staffing representation (using~$c$) is presented for the $R=R_Q$ case; however, this square-root asymptotic line approaches a step function at $s=R$ in the limit of large $R$. The comparison of the two asymptotic lines indicates that the CBC scheme ($R_Q<R$) makes the system more robust to changes in parameters ($s$ for this case) in order to maintain $P_Q$ in a QED regime.}
\begin{figure}[H]
\centering
{\includegraphics*[scale=.45]{./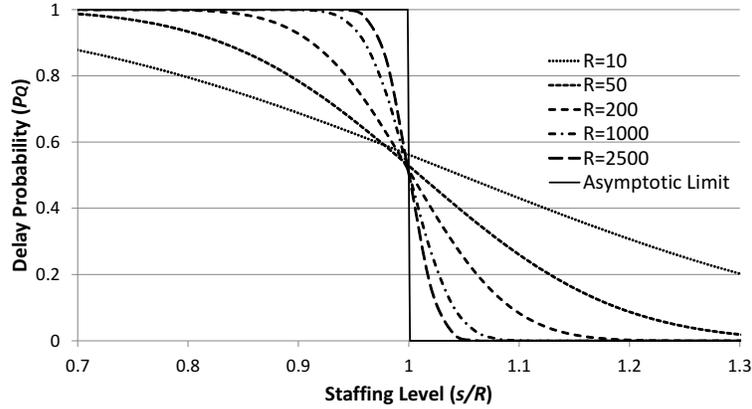}}
\caption{\textcolor{black}{Impact of change in $s/R$ on $P_Q$ (linear scale; $R=R_Q$ case). \emph{Notes}: $\mu=1$, $\gamma=1$, $\varepsilon=\tau=0$, and $\lambda=R=10, 50, 200, 1000, 2500$.}}\label{fig:Non-Asym-PQ-for-ErlangA}
\end{figure}
\begin{figure}[H]
\centering
{\includegraphics*[scale=.45]{./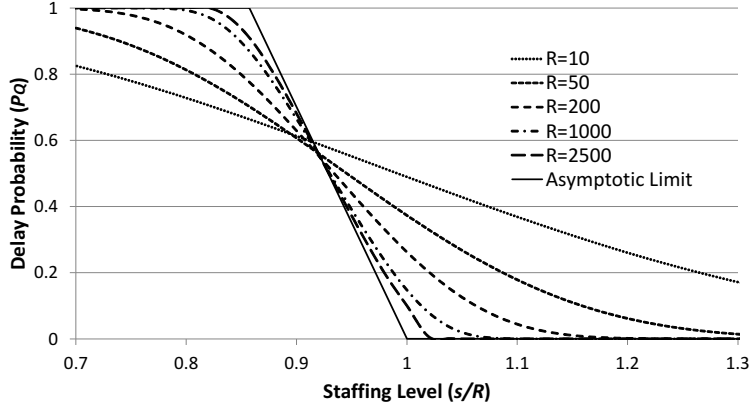}}
\caption{\textcolor{black}{Impact of change in $s/R$ on $P_Q$ (linear scale; $R_Q<R$ case). \emph{Notes}: $\mu=1$, $\gamma=1$, $\varepsilon=0.1$, $\tau=0.05$, and $\lambda=R=10, 50, 200, 1000, 2500$.}}\label{fig:Non-Asym-PQ-for-ErlangAE}
\end{figure}

\textcolor{black}{Finally, we observe the impact of traffic intensity $R/s\:(=\lambda/(s\mu))$ on $P_Q$ by changing $\lambda$ (while fixing $s$ and $\mu$). Figure~\ref{fig:Robustness-of-ErlangAE} compares no intervention ($R_Q=R$; $\varepsilon=\tau=0$) and intervention ($R_Q<R$; $\varepsilon=0$, $\tau=0.1, 0.2$) cases. The figure shows how $P_	Q$ responds to the change in $\lambda$ while fixing other parameters (i.e., we control $\lambda$ to vary $R/s$ to draw each line): If the $P_Q$ line is flatter, the system is more robust to the change in $\lambda$. Figure~\ref{fig:Robustness-of-ErlangAE} shows that smaller systems are proportionally less sensitive to changes in $\lambda$ (note that we fix $\mu$ in this figure), while for larger service systems, it is important to control (i.e., lower) $R_Q$, enabling the system to operate in a wider region of a QED regime. Asymptotic lines indicated in Figure~\ref{fig:Robustness-of-ErlangAE} are: a step function at $\frac{R}{s}=1$ for the $\tau=0$ case and close-to-linear lines $P_Q=\frac{1-\frac{s}{R}}{1-\frac{R_Q}{R}}=\frac{1+\tau}{\varepsilon+\tau}\frac{\frac{R}{s}-1}{\frac{R}{s}}\:(\approx \frac{1+\tau}{\varepsilon+\tau}(\frac{R}{s}-1))$ at $1\le \frac{R}{s}\le \frac{R}{R_Q}\:(=\frac{1+\tau}{1-\varepsilon})$ for the $\tau=0.1, 0.2$ cases (see Table~\ref{tab:asymptotic representation of Erlang A}).}
\begin{figure}[H]
\centering
{\includegraphics*[scale=0.45]{./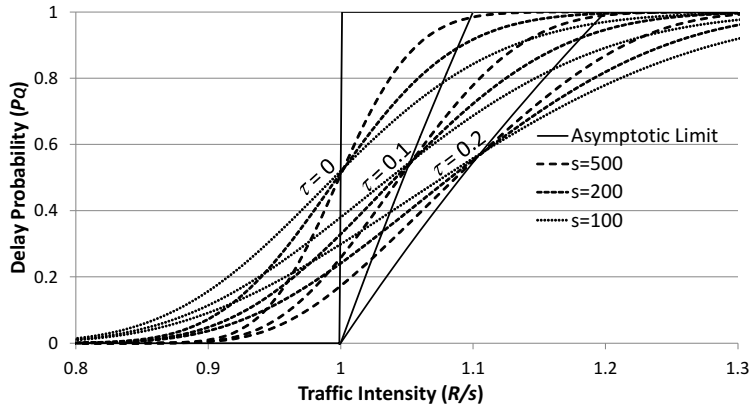}}
\caption{\textcolor{black}{Impact of change in $R/s$ on $P_Q$ (linear scale). \emph{Notes}: $\mu=1$, $\gamma=1$, $\varepsilon=0$, $\tau=0, 0.1, 0.2$, $R/s=\lambda/s$, and $s=100, 200, 500$.}}\label{fig:Robustness-of-ErlangAE}
\end{figure}


\section{Conclusion}\label{sec:conclusion}
\textcolor{black}{In this paper we study a modified Erlang~A model, which allows arrival and service rates to change when systems are congested---we call this intervention the congestion-based control (CBC) scheme. We derive non-asymptotic and asymptotic normal representations for the quality of service (QoS) performance indicators: Non-asymptotic formulae provide easy-to-calculate yet accurate results for systems of both small and large sizes, while asymptotic formulae provide insight into the QoS levels of very large systems. Specifically, by drawing the three distinct asymptotic regimes (ED, QD, and QED) in a phase diagram, we reveal that the highly sensitive QoS properties of systems with customer abandonment can be attributed to operation at the point of singularity where the three regimes co-exist. We demonstrate that the CBC scheme can avoid this point of singularity, and thus achieve a more robust system that is less sensitive to changes in parameters.}

\textcolor{black}{For the analysis of the modified Erlang A model under the CBC scheme, we utilize the Markov chain decomposition method \cite{sasanuma2019markov}. This method allows us to decompose a complex Markov chain into simpler sub-chains, which are then analyzed individually; this way, we can treat each sub-system precisely while still maintaining the key properties (such as the resource requirement) of each sub-system in the final representation of performance indicators. In fact, the explicit dependence of the asymptotic QoS indicators on each decomposed sub-chain is easy to derive thanks to the Markov chain decomposition method. We believe that our methodology to derive the results in this study can be instrumental in understanding the properties of other complicated Markov chain models.}

\clearpage



\bibliography{abandonment-ref} 

\begin{thebibliography}{10}
\expandafter\ifx\csname url\endcsname\relax
  \def\url#1{\texttt{#1}}\fi
\expandafter\ifx\csname urlprefix\endcsname\relax\def\urlprefix{URL }\fi
\expandafter\ifx\csname href\endcsname\relax
  \def\href#1#2{#2} \def\path#1{#1}\fi

\bibitem{garnett2002designing}
O.~Garnett, A.~Mandelbaum, M.~Reiman, {Designing a call center with impatient
  customers}, Manufacturing \& Service Operations Management 4~(3) (2002)
  208--227.

\bibitem{mandelbaum2005palm}
A.~Mandelbaum, S.~Zeltyn, {The Palm/Erlang-A queue, with applications to call
  centers}, Faculty of Industrial Engineering \& Management, Technion, Haifa,
  Israel (2005).

\bibitem{whitt2006sensitivity}
W.~Whitt, {Sensitivity of performance in the Erlang-A queueing model to changes
  in the model parameters}, Operations Research 54~(2) (2006) 247--260.

\bibitem{palm1957research}
R.~C.~A. Palm, {Research on telephone traffic carried by full availability
  groups}, Tele, 1957.

\bibitem{udagawa1957queue}
K.~Udagawa, G.~Nakamura, {On a Queue in which Joining Customers Give up Their
  Services Halfway}, J. Ops. Res. Japan 1 (1957) 59--76.

\bibitem{haight1959queueing}
F.~A. Haight, {Queueing with reneging}, Metrika 2~(1) (1959) 186--197.

\bibitem{ancker1963a}
C.~Ancker, A.~Gafarian, {Some queuing problems with balking and reneging. I},
  Operations Research 11~(1) (1963) 88--100.

\bibitem{larson2010congestion}
R.~C. Larson, K.~Sasanuma, {Congestion pricing: A Parking queue model}, Journal
  of Industrial and Systems Engineering 4~(1) (2010) 1--17.

\bibitem{kaplan1987analyzing}
E.~H. Kaplan, {Analyzing tenant assignment policies}, Management Science 33~(3)
  (1987) 395--408.

\bibitem{zenios1999modeling}
S.~A. Zenios, {Modeling the transplant waiting list: a queueing model with
  reneging}, Queueing systems 31~(3-4) (1999) 239--251.

\bibitem{halfin1981heavy}
S.~Halfin, W.~Whitt, {Heavy-traffic limits for queues with many exponential
  servers}, Operations research 29~(3) (1981) 567--588.

\bibitem{gans2003telephone}
N.~Gans, G.~Koole, A.~Mandelbaum, {Telephone call centers: Tutorial, review,
  and research prospects}, Manufacturing \& Service Operations Management 5~(2)
  (2003) 79--141.

\bibitem{borst2004dimensioning}
S.~Borst, A.~Mandelbaum, M.~I. Reiman, {Dimensioning large call centers},
  Operations research 52~(1) (2004) 17--34.

\bibitem{bassamboo2006design}
A.~Bassamboo, J.~M. Harrison, A.~Zeevi, {Design and control of a large call
  center: Asymptotic analysis of an LP-based method}, Operations Research
  54~(3) (2006) 419--435.

\bibitem{gurvich2008service}
I.~Gurvich, M.~Armony, A.~Mandelbaum, {Service-level differentiation in call
  centers with fully flexible servers}, Management Science 54~(2) (2008)
  279--294.

\bibitem{mandelbaum2009staffing}
A.~Mandelbaum, S.~Zeltyn, {Staffing many-server queues with impatient
  customers: Constraint satisfaction in call centers}, Operations Research
  57~(5) (2009) 1189--1205.

\bibitem{zhang2012staffing}
B.~Zhang, J.~S. van Leeuwaarden, B.~Zwart, {Staffing call centers with
  impatient customers: Refinements to many-server asymptotics}, Operations
  Research 60~(2) (2012) 461--474.

\bibitem{dai2012many}
J.~Dai, S.~He, {Many-server queues with customer abandonment: A survey of
  diffusion and fluid approximations}, Journal of Systems Science and Systems
  Engineering 21~(1) (2012) 1--36.

\bibitem{gurvich2013excursion}
I.~Gurvich, J.~Huang, A.~Mandelbaum, {Excursion-based universal approximations
  for the Erlang-A queue in steady-state}, Mathematics of Operations Research
  (2013).

\bibitem{crabill1977classified}
T.~B. Crabill, D.~Gross, M.~J. Magazine, {A classified bibliography of research
  on optimal design and control of queues}, Operations Research 25~(2) (1977)
  219--232.

\bibitem{feldman2008staffing}
Z.~Feldman, A.~Mandelbaum, W.~A. Massey, W.~Whitt, {Staffing of time-varying
  queues to achieve time-stable performance}, Management Science 54~(2) (2008)
  324--338.

\bibitem{koccauga2010admission}
Y.~L. Ko{\c{c}}a{\u{g}}a, A.~R. Ward, {Admission control for a multi-server
  queue with abandonment}, Queueing Systems 65~(3) (2010) 275--323.

\bibitem{sanders2017optimal}
J.~Sanders, S.~Borst, A.~Janssen, J.~v. Leeuwaarden, {Optimal admission control
  for many-server systems with QED-driven revenues}, Stochastic Systems 7~(2)
  (2017) 315--341.

\bibitem{ghosh2010optimal}
A.~P. Ghosh, A.~P. Weerasinghe, Optimal buffer size and dynamic rate control
  for a queueing system with impatient customers in heavy traffic, Stochastic
  Processes and Their Applications 120~(11) (2010) 2103--2141.

\bibitem{koccauga2017approximating}
Y.~L. Ko{\c{c}}a{\u{g}}a, An approximating diffusion control problem for
  dynamic admission and service rate control in a g/ m/ n+ g queue, Operations
  Research Letters 45~(6) (2017) 538--542.

\bibitem{sasanuma2019markov}
K.~Sasanuma, R.~Hampshire, A.~Scheller-Wolf, {Markov chain decomposition based
  on total expectation theorem}, arXiv preprint arXiv:1901.06780 (2019).

\bibitem{SASANUMA2021212}
K.~Sasanuma, A.~Scheller-Wolf, Approximate performance measures for a single
  station two-stage reneging queue, Operations Research Letters 49~(2) (2021)
  212--217.

\bibitem{sasanuma2021asymptotic}
K.~Sasanuma, Asymptotic analysis for systems with deferred abandonment,
  Mathematics 9~(18) (2021) 2187.

\bibitem{braess2005paradox}
D.~Braess, A.~Nagurney, T.~Wakolbinger, On a paradox of traffic planning,
  Transportation science 39~(4) (2005) 446--450.

\bibitem{pinedo2012scheduling}
M.~Pinedo, {Scheduling: theory, algorithms, and systems}, Springer, 2012.

\bibitem{tijms2003first}
H.~C. Tijms, {A First Course in Stochastic Models}, John Wiley \& Sons, 2003.

\bibitem{harchol2013performance}
M.~Harchol-Balter, {Performance Modeling and Design of Computer Systems:
  Queueing Theory in Action}, Cambridge University Press, 2013.

\bibitem{Wilson01121931}
E.~B. Wilson, M.~M. Hilferty, {The Distribution of Chi-Square}, Proceedings of
  the National Academy of Sciences 17~(12) (1931) 684--688.

\bibitem{lesch2009some}
S.~M. Lesch, D.~R. Jeske, {Some suggestions for teaching about normal
  approximations to Poisson and binomial distribution functions}, The American
  Statistician 63~(3) (2009) 274--277.

\end{thebibliography}


\begin{thebibliography}{}
\bibitem[{Kelly(1979)}]{kelly1979reversibility2}
F. P. Kelly, {Reversibility and Stochastic Networks},
John Wiley \& Sons, 1979.

\bibitem[{Kleinrock(1975)}]{kleinrock1975queueing2}
L. Kleinrock, {Queueing Systems, Volume I: Theory},
John Wiley \& Sons, 1975.

\bibitem[{Harchol-Balter(2013)}]{harchol2013performance2}
M. Harchol-Balter, {Performance Modeling and Design of Computer Systems: Queueing Theory in Action},
Cambridge University Press, 2013

\bibitem[{Coffman et al.(1994)}]{coffman1994processor2}
E. G. Coffman Jr, A. A. Puhalskii, M. I. Reiman, P. E. Wright,
Processor-shared buffers with reneging, {Performance Evaluation} 19 (1) (1994) 25--46.
\end{thebibliography}






\newpage
\appendix
\appendixtitleon
\begin{appendices}{\textbf{\large Appendix}}

\section{Proofs}


\begin{proof}[Proof of Lemma \ref{str-rep}.]

We first prove Equations \eqref{pi_s} and \eqref{P_Q}. Note that from Corollary 1.10. in \cite{kelly1979reversibility2}, ${\pi_s^i=\text{Pr}\{\text{\# of customers}=s|\text{ system is in sub-chain }i\}=\pi_s/P_i}$, where the state $s$ is in sub-chain $i$. Therefore, $P_i=\pi_s/\pi_s^i$ holds for sub-chain~$i\:(=1,2)$. Using this result and the property that both sub-chains overlap at a single state $s$, we have
$$1=\sum _{i=1}^{2}P_{i}  -\pi _{s} =\sum _{i=1}^{2}\frac{\pi _{s} }{\pi _{s} ^{i} }  -\pi _{s} =\pi _{s} \cdot \left(\sum _{i=1}^{2}\frac{1}{\pi _{s} ^{i} }  -1\right),$$
from which we obtain Equations \eqref{pi_s} and \eqref{P_Q}.

In the derivation above, we use the fact from \cite{kelly1979reversibility2} that the sub-chain's steady-state probability is equal to the conditional steady-state probability of the full Markov chain (MC). However, this assumption is not trivial. For our model, this holds because the modified Erlang A MC is reversible. If it is not reversible, we need to decompose the MC more carefully to ensure the steady-state probabilities of sub-chains equal the conditional steady-state probabilities of the full chain.

We next derive Equations \eqref{P_AB} and \eqref{P_AB2}. From the flow balance condition, the total arrival rate $\lambda $ should be equal to the sum of the average number of customers being serviced per unit time and the average number of customers reneging/balking per unit time. Using this condition, the average number of customers being serviced given that a state is in sub-chain 2, is calculated as~${0\cdot \pi _{s}^{2} +s\mu _{Q} \cdot (1-\pi _{s}^{2})=s\mu _{Q} \cdot (1-\pi _{s}^{2})}$ because the departure rate $s\mu _Q$ only exists when the system (sub-chain 2) is busy. Therefore, the proportion of reneging/balking customers among the total arriving customers given in sub-chain 2 is
\footnotesize
$$\frac{\lambda -s\mu _Q \cdot (1-\pi _{s}^{2} )}{\lambda } = \pi _s^2 + \left(1-\frac{s\mu _Q}{\lambda}\right)(1 - \pi _s^2)=\frac{1+ (1-\frac{s\mu _Q}{\lambda})(\frac{1}{\pi_s^2} - 1)}{\frac{1}{\pi_s^2}}=\frac{1+ p\cdot (\frac{1}{\pi_s^2} - 1)}{\frac{1}{\pi_s^2}},$$
\normalsize
where $p=1-(s\mu_Q /\lambda )  = 1-(1 + \tau )(s\mu /\lambda ) = 1-(1 + \tau )(a+1)$. (Note that $s\mu/\lambda=s/R=a+1$ from the definition of the linear coefficient $a$.) This proportion is equivalent to the conditional probability of abandonment given in sub-chain 2:
$$P_{ab}^2=\frac{1+ p\cdot (\frac{1}{\pi_s^2} - 1)}{\frac{1}{\pi_s^2}}.$$
Hence, using the total probability theorem, Equations \eqref{P_AB} and \eqref{P_AB2} are proved as
$${P_{ab}} ={P_{ab}^2} \cdot {P_Q}= \frac{1+ p\cdot (\frac{1}{\pi_s^2} - 1)}{\frac{1}{\pi_s^2}}{P_Q}
= \frac{1+ p\cdot (\frac{1}{\pi_s^2} - 1)}{\frac{1}{\pi_s^1}+\frac{1}{\pi_s^2}-1}.$$

Next, we show Equations~\eqref{L_Q} and~\eqref{L_Q2}. Equation~\eqref{L_Q} is obtained from the flow balance requirement: ${\lambda P_{ab} =\gamma L_{Q} +\varepsilon \lambda P_{Q}}$ for reneging systems and $\lambda P_{ab} =\delta L_{Q} +\varepsilon \lambda P_{Q}$ for balking systems. (Note that $\delta \cdot L_{Q} =\delta \cdot \left(\sum _{k=0}^{u}k\pi _{s+k}  \right)=\sum _{k=0}^{u}\left(\delta k\right)\pi _{s+k}$ is the average number of balking customers, where we define~${u=\left\lfloor \lambda _{Q} /\delta \right\rfloor}$; at states greater than $s+u$ no customer enters the system and all arrivals balk.) Equation~\eqref{L_Q2} is obtained by plugging Equations~\eqref{P_Q} and~\eqref{P_AB2} into Equation~\eqref{L_Q}.
\end{proof}

\begin{remark}
The average delay time can also be represented by the blocking probabilities. For the reneging (or balking) system, we have the following expressions, respectively:
\begin{equation*}
\label{W_Q reneging}
W_{Q} =\frac{1}{\gamma } \cdot \frac{P_{ab} -\varepsilon P_{Q} }{1-\varepsilon P_{Q} }=\frac{1}{\gamma} \cdot \frac{(1-\varepsilon)+ (p-\varepsilon)(\frac{1}{\pi _s^2}-1)}{(\frac{1}{\pi _s^1}-\varepsilon)+(1-\varepsilon)(\frac{1}{\pi _s^2}-1)}
\end{equation*}
or
\begin{equation*}
\label{W_Q balking}
W_{Q} =\frac{1}{\delta } \cdot \frac{P_{ab} -\varepsilon P_{Q} }{1-P_{ab} }=\frac{1}{\delta} \cdot \frac{(1-\varepsilon)+ (p-\varepsilon)(\frac{1}{\pi _s^2}-1)}{(\frac{1}{\pi _s^1}-1)+(1-p)(\frac{1}{\pi _s^2}-1)}.
\end{equation*}
These formulae can be proved by Little's law, $L_{Q} =\lambda _{{\rm eff}} W_{Q} $, where the effective arrival rate for reneging systems and balking systems are  $\lambda _{{\rm eff}} =\lambda \cdot (1-P_{Q} )+\lambda _{Q} P_{Q} =\lambda \cdot (1-P_{Q} )+(1-\varepsilon )\lambda P_{Q} =\lambda \cdot (1-\varepsilon P_{Q} )$ and $\lambda _{{\rm eff}} =\lambda \cdot (1-P_{ab} )$, respectively.
\end{remark}

\
\begin{proof}[Proof of Corollary \ref{PQ-Pab}.]
We denote $\pi_s^1$ as $P_{block}$. Using Equation \eqref{P_Q}, we obtain $$\frac{1}{\pi _{s}^{2} } =\frac{P_{Q} }{1-P_{Q} } \cdot \frac{1-P_{block} }{P_{block} }$$ and equivalently $$\frac{1}{\pi _{s}^{2} } -1=\frac{P_{Q} -P_{block} }{\left(1-P_{Q} \right)P_{block} }. $$ By plugging these two equations into Equation \eqref{P_AB}, we obtain Equation \eqref{PQ-Pab-Pblock}.
\end{proof}
\
\begin{proof}[Proof of Corollary \ref{trade-off}.]
The first part of Corollary \ref{trade-off} may be obvious intuitively: $P_Q$ decreases as any of the abandonment-related parameters increase. To prove this result, note that $1/\pi_s^2=1+\sum\nolimits_{k=1}^\infty\prod\nolimits_{i=0}^{k-1}(\lambda_{s+i}/\mu_{s+i+1})$ (Equation (3.12) in \cite{kleinrock1975queueing2}), where $\lambda_k$ and $\mu_k$ for $k \geq s$ are defined in Section 3.1. An increase in one of the abandonment-related parameters ($\gamma$, $\delta$, and $\varepsilon$) always leads to a reduction in $\lambda_k$ or an increase in $\mu_k$, reducing $1/\pi_s^2$, while $(1/\pi_s^1)-1$ remains positive and constant. From Equation \eqref{P_Q}, we conclude that $P_Q$ decreases as any of the abandonment-related parameters increase. For the second part, assume that all performance-related parameters ($s$, $\lambda$, $\mu$, and $\tau$) are fixed, and hence, $p$ $(=1-s\mu_Q/\lambda)$ and $P_{block}$ are fixed as well. In this case, $P_Q$ and $P_{ab}$ are the only indicators that depend on abandonment-related parameters. If $p<P_{block}$ ($p>P_{block}$) holds, Equation \eqref{PQ-Pab-Pblock} shows there exists (does not exist) a trade-off between $P_Q$ and $P_{ab}$, respectively. Combining this result with the first part of this corollary, we can derive the second part.
\end{proof}

\begin{remark}
Examples of both $p<P_{block}$ and $p>P_{block}$ cases are easy to find: If $s\mu_Q$ is larger than $\lambda$, $p$ becomes negative and $p<P_{block}$ holds; in contrast, if $s\mu_Q$ is much smaller than $\lambda$, $p$ becomes close to 1 and $p>P_{block}$ holds. From our non-asymptotic approximation, we can see that non-negative $\tau$ is typically a sufficient condition for $p<P_{block}$ to hold, and also that negative $\tau$ is often a necessary condition for $p>P_{block}$ to hold.
\end{remark}

\
\begin{proof}[Proof of Lemma \ref{block-poisson}.]
We discuss each sub-chain separately. Define R.V.'s and parameters as in Table \ref{tab:setup-poisson}. Assume $s$, $s'$, and $s''$ are non-negative integers.

\begin{enumerate}

\item M/M/s/s sub-chain (left sub-chain): This result is shown in \cite{harchol2013performance2}.

For all $k=0,1,2,\dots,s $,
\begin{eqnarray}
\pi _k^1 &=& \pi _{k + 1}^1\frac{{(k + 1)\mu }}{\lambda } = \pi _{k + 1}^1\frac{{k + 1}}{R} = ... = \pi _s^1\frac{{(k + 1)(k + 2) \cdots s}}{{{R^{s - k}}}}  \nonumber \\
&=& \pi _s^1\frac{{s!}}{{k!}}\frac{{{R^k}}}{{{R^s}}}=\pi _s^1\frac{{{e^{ - R}}{R^k}/k!}}{{{e^{ - R}}{R^s}/s!}} = \pi _s^1\frac{{\Pr \{ {X_P} = k\} }}{{\Pr \{ {X_P} = s\} }}.\nonumber
\end{eqnarray}
By summing up the terms with respect to $k$ and applying the normalization condition, we obtain
$$\frac{1}{\pi_s^1} = \frac{\Pr \{X_P \le s\}}{\Pr \{X_P =s\}} =\frac{F_{P}(s;R)}{f_{P}(s;R)}.$$

We can confirm that this result matches the Erlang Loss (Erlang B) formula:
$$\frac{1}{{\pi _s^1}} = \frac{{\sum\limits_{k = 0}^s {{{(\lambda /\mu )}^i}/k!} }}{{{{(\lambda /\mu )}^s}/s!}}.$$

\item Reneging sub-chain (right sub-chain):

For all $k=0,1,2,\dots $,
\begin{eqnarray}
\pi _{s + k}^2 &=& \pi _{s + k - 1}^2\frac{{{\lambda _Q}}}{{s{\mu _Q} + k\gamma }} = \pi _{s + k - 1}^2\frac{{({\lambda _Q}/\gamma )}}{{(s{\mu _Q}/\gamma ) + k}} = \pi _{s + k - 1}^2\frac{{R'}}{{s' + k}}  \nonumber \\
&=& \pi _{s + k - 2}^2\frac{{R'}}{{s' + k}}\cdot \frac{{R'}}{{s' + k-1}}=... =\pi _s^2\frac{{R{'^k}}}{{(s' + 1)(s' + 2) \cdot  \cdot  \cdot (s' + k)}} \nonumber \\
&=& \pi _s^2\frac{{{e^{ - R'}}R{'^{s' + k}}/(s' + k)!}}{{{e^{ - R'}}R{'^{s'}}/s'!}} = \pi _s^2\frac{{\Pr \left\{ {X{'_P} = s' + k} \right\}}}{{\Pr \left\{ {X{'_P} = s'} \right\}}}. \nonumber
\end{eqnarray}
 
By summing up the terms with respect to $k$ and applying the normalization condition, we obtain
$$\frac{1}{\pi_s^2} = \frac{\Pr \{X'_P \ge s'\}}{\Pr \{X'_P =s'\}}.$$

We further rewrite this representation using the Poisson PMF/CDF:

\begin{eqnarray}
\frac{1}{\pi _{s}^{2} } &=& \frac{\Pr \left\{X'_{P} \ge s'\right\}}{\Pr \left\{X'_{P} =s' \right\}} =\frac{\Pr \left\{X'_{P} =s' \right\}}{\Pr \left\{X'_{P} =s' \right\}} +\frac{\Pr \left\{X'_{P} \ge s' +1\right\}}{\Pr \left\{X'_{P} =s' \right\}}  \nonumber \\
&=& 1+\frac{1-\Pr \left\{X'_{P} \le s' \right\}}{\Pr \left\{X'_{P} =s' \right\}}= 1+\frac{1-F_P(s';R')}{f_P(s';R')}. \nonumber
\end{eqnarray}

\item Balking sub-chain (alternate right sub-chain):

For all $k=0,1,2,\dots ,s''$,
\begin{eqnarray}
\pi _{s + k}^2 &=& \pi _{s + k - 1}^2\frac{{{\lambda _Q} - \left( {k - 1} \right)\delta }}{{s{\mu _Q}}} = \pi _{s + k - 1}^2\frac{{({\lambda _Q}/\delta ) - \left( {k - 1} \right)}}{{(s{\mu _Q}/\delta )}}\nonumber \\
&=& \pi _{s + k - 1}^2\frac{{s'' - \left( {k - 1} \right)}}{{R''}} = ...= \pi _s^2\frac{{(s'' - k + 1) \cdots (s'' - 1)s''}}{{R'{'^k}}}  \nonumber \\
&=& \pi _s^2\frac{{{e^{ - R''}}R'{'^{s'' - k}}/(s'' - k)!}}{{{e^{ - R''}}R'{'^{s''}}/s''!}} = \pi _s^2\frac{{\Pr \left\{ {X'{'_P} = s'' - k} \right\}}}{{\Pr \left\{ {X'{'_P} = s''} \right\}}}. \nonumber
\end{eqnarray}

By summing up the terms with respect to $k$ and applying the normalization condition, we obtain
$$\frac{1}{\pi_s^2} = \frac{\Pr \{X''_P \le s''\}}{\Pr \{X''_P =s''\}} = \frac{F_{P}(s'';R'')}{f_{P}(s'';R'')}.$$ 

\end{enumerate}
\end{proof}

\begin{proof}[Proof of Proposition \ref{Poisson-Normal}.]
\
\begin{enumerate}
\item Poisson CDF to standard normal CDF:

We first make a discrete-to-continuous conversion from Poisson to normal with a continuity correction ``+0.5". We then convert normal to standard normal using $c_{s;R}$ and $\Delta_R$:
\[{F_P}(s;R) \approx {F_N}(s + 0.5;R,\sqrt R ) = \Phi \left(\frac{(s+0.5)-R}{\sqrt{R}}\right)= \Phi \left(c_{s;R} + \Delta_R\right).\]

\item Poisson PMF to standard normal PDF:

Using the result above and the assumption that $\Delta_R$ is sufficiently small, we obtain
\small
\begin{eqnarray}
{f_P}(s;R) &=& {F_P}(s;R) - {F_P}(s-1;R) \approx {F_N}(s + 0.5;R,\sqrt R ) - {F_N}(s - 0.5;R,\sqrt R ) \nonumber \\
&\approx& \Phi (c_{s;R} + \Delta_R) - \Phi (c_{s;R} - \Delta_R) \approx 2\phi (c_{s;R} + \Delta_R)\Delta_R \nonumber \\
&=& \frac{\phi (c_{s;R} + \Delta_R)}{\sqrt R}=\frac{a_{s;R}\cdot \phi (c_{s;R} + \Delta_R)}{c_{s;R}}. \nonumber
\end{eqnarray}
\normalsize
\item Poisson modified hazard function to standard normal hazard function:

Using the above results and the definition of the hazard function for the standard normal distribution, it is straightforward to derive the following:
\small
\begin{equation}
\frac{f_P(s;R)}{1-F_P(s;R)} \approx \frac{\phi (c_{s;R}+\Delta_R)}{\sqrt{R} \cdot \left(1-\Phi (c_{s;R}+\Delta_R)\right)} =\frac{h(c_{s;R}+\Delta_R)}{\sqrt{R} }=\frac{a_{s;R}\cdot h(c_{s;R} + \Delta_R)}{c_{s;R}} \nonumber
\end{equation}
\begin{equation}
\frac{f_{P}(s;R)}{F_{P}(s;R)} \approx \frac{\phi (c_{s;R}+\Delta_R)}{\sqrt{R} \cdot \Phi (c_{s;R}+\Delta_R)} =\frac{h(-c_{s;R}-\Delta_R)}{\sqrt{R} }=\frac{a_{s;R}\cdot h(-c_{s;R} - \Delta_R)}{c_{s;R}}. \nonumber
\end{equation}
\normalsize
\end{enumerate}
\end{proof}
\

\begin{proof}[Proof of Proposition \ref{sq-root-staff}.]
Without calculation, the uniqueness and the existence of the solution can be inferred from the physical property of the model: If the number of staff increases from 0 to infinity, the exact as well as the non-asymptotic representation of $P_{Q}$ and $P_{ab}$ monotonically (strictly) decrease from 1 to 0. The non-asymptotic representation of $P_{Q}$ and $P_{ab}$ that satisfy such a property must have a unique solution to $P_{Q} =\alpha $ or $P_{ab} =\alpha $ for any $\alpha \in \left(0,1\right)$. The ceiling of the solution needs to be taken to obtain the optimal staffing level because a staffing level should be an integer whereas a solution (staffing coefficients $c$ or $a$) is not. 
\end{proof}

\section{Exact Representation of Performance Indicators for the Modified Erlang A Reneging Model}

The Poisson representation of performance indicators is exact but only for a set of parameters that satisfies integer constraints for staffing levels of the second (reneging/balking) sub-chain. If we want to know exact solutions for a general set of parameters, we should use the exact representation of the blocking probability of the sub-chain we are interested in. For a reneging sub-chain, the blocking probability is known as
\[\frac{1}{\pi _{s}^{2}} =\frac{s\mu _{Q}}{\gamma}\int _{0}^{1}e^{\lambda _{Q} t/\gamma } (1-t)^{\lambda _{Q} t/\gamma -1} dt. \] 

\begin{proof}
We follow \cite{coffman1994processor2}. Using gamma and beta functions, for $k=0,1,2,\dots $,
\footnotesize
\begin{eqnarray*}
\pi _{s+k}^{2} &=&\pi _{s+k-1}^{2} \cfrac{\lambda _{Q} }{s\mu _{Q} +k\gamma } =...=\pi _{s}^{2} (\lambda _{Q} /\gamma )^{k} (s\mu _{Q} /\gamma )\cfrac{1}{(s\mu _{Q} /\gamma )(s\mu _{Q} /\gamma +1)\cdot \cdot \cdot (s\mu _{Q} /\gamma +k)} \\
&=&\pi _{s}^{2} (s\mu _{Q} /\gamma )\cfrac{(\lambda _{Q} /\gamma )^{k} }{k!} \cfrac{\Gamma (s\mu _{Q} /\gamma )\Gamma (k+1)}{\Gamma (s\mu _{Q} /\gamma +k+1)} =\pi _{s}^{2} \cfrac{(\lambda _{Q} /\gamma )^{k} }{k!} B\left(s\mu _{Q} /\gamma ,k+1\right) \\
&=&\pi _{s}^{2} (s\mu _{Q} /\gamma )\cfrac{(\lambda _{Q} /\gamma )^{k} }{k!} \int _{0}^{1}t^{k} (1-t)^{s\mu _{Q} /\gamma -1} dt =\pi _{s}^{2} (s\mu _{Q} /\gamma )\int _{0}^{1}\cfrac{(\lambda _{Q} t/\gamma )^{k} }{k!} (1-t)^{s\mu _{Q} /\gamma -1} dt.
\end{eqnarray*}
\normalsize
From the normalization condition, 
\begin{eqnarray*}
1&=&\sum _{k=0}^{\infty }\pi _{s+k}^{2}  =\pi _{s}^{2} (s\mu _{Q} /\gamma )\int _{0}^{1}\left(\sum _{k=0}^{\infty }\cfrac{(\lambda _{Q} t/\gamma )^{k} e^{-\lambda _{Q} t/\gamma } }{k!}  \right)e^{\lambda _{Q} t/\gamma } (1-t)^{\lambda _{Q} t/\gamma -1} dt  \\
&=&\pi _{s}^{2} \cdot \frac{s\mu _{Q}}{\gamma} \int _{0}^{1}e^{\lambda _{Q} t/\gamma } (1-t)^{\lambda _{Q} t/\gamma -1} dt.
\end{eqnarray*}
\end{proof}

Together with the exact solution (Erlang Loss formula) for the left sub-chain (an M/M/s/s queue), which is 

$$\frac{1}{\pi _{s}^{1} } =\frac{\sum _{i=0}^{s}\left(\lambda /\mu \right)^{i} /i! }{\left(\lambda /\mu \right)^{s} /s!} =\frac{\sum _{i=0}^{s}e^{-\lambda /\mu } \left(\lambda /\mu \right)^{i} /i! }{e^{-\lambda /\mu } \left(\lambda /\mu \right)^{s} /s!} =\frac{\Pr \left\{X_{P} \le s\right\}}{\Pr \left\{X_{P} =s\right\}},$$
we can derive the exact representation of performance indicators from Lemma~\ref{str-rep}. However, the exact representation is analytically complex and harder to evaluate compared to the normal representation of performance indicators we derive in this paper.

\section{Application of Lemma \ref{str-rep}}
Lemma \ref{str-rep} holds regardless of the structure of the left sub-chain (sub-chain 1). For example, if the right sub-chain (sub-chain 2) is a single state $s$, then using the relationships $\pi _s^1=P_{block}$ (by definition) and $\pi _s^2=1$, we obtain an (obvious) general representation for the ``Erlang B" model (where the left sub-chain is not necessarily an M/M/s/s queue): $P_Q=P_{ab}=P_{block}$. Another example is that if the right sub-chain is an M/M/1 queue with system utilization $\rho \doteq \lambda/(s\mu)$, then using the relationships $\pi _s^1=P_{block}$ (by definition), $\pi _s^2=1-\rho$, $p=-a$, and $a=(s-R)/R=(1-\rho)/\rho$, we obtain a general representation for the ``Erlang C" model (where again, the left sub-chain is not necessarily an M/M/s/s queue): ${P_Q} = \dfrac{P_{block}}{1 - \rho +\rho P_{block}}$ and $P_{ab}=0$. These relationships can be confirmed for the standard Erlang C model by a direct calculation (for example, see Equation (15.5) in \cite{harchol2013performance2}).

Lemma \ref{str-rep} and Corollary \ref{str-rep-P_Q-} are both exact, and therefore can be utilized when deriving non-asymptotic formulae for performance indicators, but can also be utilized to obtain approximate results. For example, if $\pi_s$ is small, $\pi_s$  can be dropped from Equations \eqref{P_Q-P_Q-}, \eqref{P_ab-P_Q-}, and \eqref{L_Q-P_Q-} to obtain the following approximate expressions:

$$P_Q\approx P_{Q-},$$
$$P_{ab} \approx p P_{Q-},$$
$$L_Q\approx \dfrac{\lambda}{\theta} \cdot (p-\varepsilon) P_{Q-}= \dfrac{\lambda_Q-s\mu_Q}{\theta} P_{Q-}.$$

These approximate expressions are useful when staffing level $s$ is either in shortage or in excess enough to make $\pi_s$ close to 0. For example, if $s$ is in extreme shortage, $\pi_s^2$ gets close to 0 while $\pi_s^1$ does not, and therefore, we can approximate $\pi_s$ by 0 and $P_{Q-}$ by 1, in which case the above approximate expressions become a frequently used heavy-traffic approximation: $P_Q \approx 1$, $P_{ab} \approx p$, and $L_Q \approx \dfrac{\lambda_Q-s\mu_Q}{\theta}$.

\section{Comparison of Non-Asymptotic Representation and Square-root Staffing Rule for Erlang A model}

For simplicity, define $\phi(c)$ and $\omega(c)$ as follows (note: $\phi(c)$ is defined in Equation \eqref{phi}):
\begin{equation*}
\phi(c) \doteq \dfrac{\frac{\sqrt{\mu_Q/\theta}}{h \left(\sqrt{\mu_Q/\theta}\cdot c \right)}}{\frac{1}{h \left(-c\right)}+\frac{\sqrt{\mu_Q/\theta}}{h \left(\sqrt{\mu_Q/\theta}\cdot c \right)}} \text{ and }\omega(c) \doteq \dfrac{1}{\frac{1}{h \left(-c\right)}+\frac{\sqrt{\mu_Q/\theta}}{h \left(\sqrt{\mu_Q/\theta}\cdot c \right)}}.
\end{equation*}

Specifically, for the original Erlang A reneging model, we assume $\mu_Q=\mu$ and $\theta=\gamma$, and denote
\begin{equation*}
\phi^A(c) \doteq \dfrac{\frac{\sqrt{\mu/\gamma}}{h \left(\sqrt{\mu/\gamma}\cdot c \right)}}{\frac{1}{h \left(-c\right)}+\frac{\sqrt{\mu/\gamma}}{h \left(\sqrt{\mu/\gamma}\cdot c \right)}} \text{ and } \omega^A(c) \doteq \dfrac{1}{\frac{1}{h \left(-c\right)}+\frac{\sqrt{\mu/\gamma}}{h \left(\sqrt{\mu/\gamma}\cdot c \right)}}.
\end{equation*}
Denote also that $p^A \doteq -c/\sqrt{R}$, which is Equation \eqref{p} for the $\mu_Q=\mu$ case. Then the square-root staffing rule for the original Erlang A reneging model is represented as follows:
\begin{equation}
P_Q =\phi^A(c) \text{ and } P_{ab} = \frac{\omega^A(c)}{\sqrt{R}}+p^A \cdot \phi^A(c).
\label{square-root staffing for Erlang A}
\end{equation}

In contrast, by assuming $\varepsilon=\tau=0$ in Tables \ref{tab:normal-parameter} and \ref{tab:norm-rep}, a non-asymptotic representation of $P_{Q-}$ and $\pi_s$ becomes
\begin{equation*}
P_{Q-}^A \doteq \dfrac{\frac{\sqrt{\mu/\gamma}}{h \left(\sqrt{\mu/\gamma}\cdot c +\Delta' \right)}}{\frac{1}{h \left(-c-\Delta\right)}+\frac{\sqrt{\mu/\gamma}}{h \left(\sqrt{\mu/\gamma}\cdot c +\Delta' \right)}} \text{ and } \pi_s^A \doteq \dfrac{\dfrac{1}{\sqrt{R}}}{\frac{1}{h \left(-c-\Delta\right)}+\frac{\sqrt{\mu/\gamma}}{h \left(\sqrt{\mu/\gamma}\cdot c +\Delta' \right)}}.
\end{equation*}
Then using Corollary \ref{str-rep-P_Q-}, our non-asymptotic formulae for the original Erlang A reneging model become
\begin{equation}
P_Q = \pi_s^A+P_{Q-}^A \text{ and } P_{ab} =\pi_s^A+p^A \cdot P_{Q-}^A,
\label{non-asymptotic staffing for Erlang A}
\end{equation}

By comparing Equations \eqref{square-root staffing for Erlang A} and \eqref{non-asymptotic staffing for Erlang A}, we can see that the term corresponding to $\pi_s^A$ is missing from $P_Q$ of the square-root staffing rule. Note that the continuity correction terms ($\Delta$ and $\Delta'$) are also missing in the square-root staffing rule, but their contributions are smaller than $\pi_s^A$.

\begin{remark}
Using our non-asymptotic representation, we can derive a more general square-root staffing rule for the modified Erlang A reneging model with $R=R_Q$ (i.e., $\varepsilon+\tau=0$). By assuming $\varepsilon+\tau=0$ in Tables \ref{tab:normal-parameter} and \ref{tab:norm-rep} and ignoring all continuity correction terms, a non-asymptotic representation of $P_{Q-}$ and $\pi_s$ becomes
\begin{equation*}
P_{Q-} = \phi(c) \text{ and } \pi_s = \dfrac{\omega(c)}{\sqrt{R}}.
\end{equation*}
Denote that $p^* \doteq \varepsilon-(1-\varepsilon)c/\sqrt{R}$, which is Equation \eqref{p} for the $\varepsilon+\tau=0$ case. Then using Corollary \ref{str-rep-P_Q-}, the square-root staffing rule for the modified Erlang A reneging model with $R=R_Q$ is represented as follows:
\begin{equation}
P_Q =\phi(c)+\frac{\omega(c)}{\sqrt{R}} \text{ and } P_{ab} =\frac{\omega(c)}{\sqrt{R}}+p^* \cdot \phi(c).
\label{square-root staffing for modified Erlang A with R=R_Q}
\end{equation}
Equation \eqref{square-root staffing for modified Erlang A with R=R_Q} is more general and precise (for $P_Q$) than Equation \eqref{square-root staffing for Erlang A} (the original square-root staffing rule) but is not as accurate as our non-asymptotic representation because of missing the continuity correction terms.
\end{remark}







\end{appendices}
\end{document}